\documentclass[a4paper,11pt]{article}
\usepackage[T1]{fontenc}
\usepackage[utf8]{inputenc}
\usepackage[english]{babel}
\usepackage{amsfonts}
\usepackage{amsthm}
\usepackage{amsmath}
\usepackage{amssymb}
\usepackage{amsthm}
\usepackage{graphicx,subfigure}
\usepackage{color}
\usepackage{mathrsfs}
\usepackage{algorithm}
\usepackage{algpseudocode}
\usepackage{cite}
\newtheorem{theorem}{Theorem} 
 
\newtheorem{remark}{Remark}

\def\R{\mathbb{R}}
\def\E{\mathbb{E}}
\def\V{{\rm \mathbb{V}ar}}
\def\C{{\rm \mathbb{C}ov}}
\def\be{\begin{equation}}
\def\ee{\end{equation}}

\algdef{SE}{Begin}{End}{\textbf{begin}}{\textbf{end}}

\addtolength{\oddsidemargin}{-1.25cm}
	\addtolength{\evensidemargin}{-1.25cm}
	\addtolength{\textwidth}{2.5cm}

	\addtolength{\topmargin}{-1.5cm}
	\addtolength{\textheight}{2cm}

\usepackage{hyperref}

\begin{document}
\title{Reduced variance random batch methods\\ for nonlocal PDEs}
\author{Lorenzo Pareschi \\
		{\small 	Maxwell Institute for Mathematical Sciences \&
Department of Mathematics} \\
		{\small Heriot-Watt University, Edinburgh, UK} \\
		{\small	Department of Mathematics and Computer Science} \\
		{\small University of Ferrara, Italy} \\
		{\small\tt l.pareschi@hw.ac.uk} \\[5mm]	
		Mattia Zanella \\
		{\small	Department of Mathematics } \\
		{\small University of Pavia, Italy} \\
		{\small\tt mattia.zanella@unipv.it}}
\date{\it Dedicated to our dear friend and colleague Shi on his 60th birthday}

\maketitle

\begin{abstract}
Random Batch Methods (RBM) for mean-field interacting particle systems enable the reduction of the quadratic computational cost associated with particle interactions to a near-linear cost. The essence of these algorithms lies in the random partitioning of the particle ensemble into smaller batches at each time step. The interaction of each particle within these batches is then evolved until the subsequent time step. This approach effectively decreases the computational cost by an order of magnitude while increasing the amount of fluctuations due to the random partitioning. In this work, we propose a variance reduction technique for RBM applied to nonlocal PDEs of Fokker-Planck type based on a control variate strategy. The core idea is to construct a surrogate model that can be computed on the full set of particles at a linear cost while maintaining enough correlations with the original particle dynamics. Examples from models of collective behavior in opinion spreading and swarming dynamics demonstrate the great potential of the present approach.
\medskip

\noindent{\bf Keywords:} Random batch methods, control variate methods, surrogate models, collective behavior, nonlocal PDEs \\

\end{abstract}

\tableofcontents

\section{Introduction}

Meanfield equations are derived as reduced complexity models describing the dynamics of large systems of interacting particles^^>\cite{Ja}. Typically, the trajectories of systems of particles are governed by stochastic differential equations for the position $x_i \in \mathbb R^d$ and velocity $v_i \in \mathbb R^d$ of the $i$th particle and are translated in systems of stochastic differential equations of the form 
\begin{equation}
\label{eq:system}
\begin{cases}
dx_i = v_i dt, \\
dv_i = \left(\dfrac{1}{N}\displaystyle \sum_{j=1}^N P(x_i,x_j,v_i,v_j)(v_j-v_i)\right)dt + \sqrt{2D(v_i)^2}dW_i^t, \qquad i = 1, \dots, N, 
\end{cases}
\end{equation}
where $P(\cdot)\ge0$ is a suitable interaction function. In the limit of a large number of particles $N$, the individual trajectories become irrelevant, and we have the opportunity to describe the average behavior of the system. In such a situation, the strength of the individual interactions becomes small, and each particle feels the effect of the force field created by all the other particles. Therefore, the statistical behavior of the system of particles is obtained by studying the evolution of the particles' density $f(x,v,t)$ which obeys a nonlocal PDE of Fokker-Planck type
\begin{equation}
\label{eq:VFP}
\partial_t + v\cdot \nabla_x f  = \nabla_v \cdot \left[ \mathcal B[f](x,v,t)f(x,v,t) \right] +   \Delta_v (D^2(v)f),
\end{equation}
where the term $D(v):\mathbb R^d\to \mathbb R$ expresses the local relevance of the diffusion and the nonlocal drift term $\mathcal B[f](x,v,t)$ is defined as follows
\begin{equation*}
\mathcal B[f](x,v,t)=  \int_{\mathbb R^d \times \mathbb R^d} P(x,y,v,w)(v-w) f(y,w,t)dv\, dy.
\end{equation*}
The derivation of the mean-field model \eqref{eq:VFP} is essential to investigate mathematically the long time behavior and the formation of collective structures and self-organisation features of the multi-agent system \eqref{eq:system}. Indeed, since the trajectory of each particle is influenced by all the other particles, the computational complexity of the particle model is $O(N^2)$ and makes the microscopic approach rapidly infeasible for $N\gg0$. 

In this direction, Random Batch Methods (RBM) have gained increased popularity in the community of interacting particle systems since they are capable to efficiently reduce the computational complexity to $O(MN)$, where $M$ is the size of the batch, at the cost of introducing an additional error in the dynamics^^>\cite{CJT, JL, JLLq, KZ}. In the context of many-particle systems, RBM methods have been proposed in^^>\cite{JLL}. Earlier approaches based on analogous Monte Carlo strategies were developed in^^>\cite{AP} and^^>\cite{CPZ,CZ} in the presence of uncertain quantities. The core of these algorithms is to randomly divide the set of particles into smaller batches at each time step, a technique closely resembling the batch methods of stochastic gradient descent algorithms. The interaction of each particle within these batches is then evolved until the subsequent time step. This approach significantly reduces the computational cost by an order of magnitude, albeit at the expense of heightened fluctuations due to the random partitioning.

In contrast with random search algorithms were the increase of fluctuations may also have a positive impact on the overall process, in RBM methods, since we aim at computing as accurate as possible the $O(N^2)$ summation, reducing the variance of the batch algorithm is of paramount importance and represents one of the main challenges.  
In this paper, inspired by similar ideas in the field of uncertainty quantification for kinetic and mean-field equations^^>\cite{DP,DP2020,Pa21}, we propose a control variate strategy based on a simpler surrogate model to reduce batch variance. However, compared to uncertainty quantification, where due to the static nature of stochastic samples it is easy to obtain correlations, here the samples, although initially correlated, naturally decorrelate as a consequence of their temporal evolution. For this purpose, we employ a reduced-complexity surrogate that can be calculated at the cost of $O(N)$, sharing the same asymptotic state as the original model.

We show that the present strategy permits obtaining a significant reduction in the variance of the batch, leading to a further speed-up in computational cost for a given accuracy. This is illustrated using models of collective behavior in opinion spreading and swarming dynamics.

The rest of the manuscript is organized as follows. In Section 2, we revise some nonlocal Fokker-Planck models that will be used as prototype examples to illustrate our approach and to test the numerical schemes. Next, Section 3 is devoted to presenting the reduced variance RBM methodology. Several numerical examples are then given in Section 4. The last section contains some concluding remarks.

\section{Vlasov-Fokker-Planck models}\label{sect:2}
In this section we survey some examples of multi-agent systems whose mean-field limit corresponds to a Vlasov-Fokker-Planck model of the type \eqref{eq:VFP}. First, we define the classical model for transport-aggregation-diffusion. Hence, we discuss the recent extensions of this model to kinetic alignment models in $\mathbb R^d$. In the space homogeneous setting, we also survey recent examples presented in opinion dynamics that have attracted great interest in the mathematical community studying systems of interacting particles. 

\subsection{The classical Vlasov-Fokker-Planck equation} 
Fokker-Planck-type collision operators are commonly introduced to define reduced complexity models for collisional dynamics, see^^>\cite{T0,Villani}.  The classical Vlasov-Fokker-Planck equation can be obtained from \eqref{eq:VFP} with the choice 
\[
D^2(v) = \sigma^2 \in \mathbb R_+, \qquad P(x,y,v,w)\equiv 1,
\]
such that 
\[
\mathcal B[f](x,v,t) = v-u,
\]
where
\[
u = \int_{\mathbb R^{2d}}vf(x,v,t)dv\,dx, \qquad \sigma^2 = \dfrac{1}{d}\int_{\mathbb R^{2d}}|v-u|^2f(x,v,t)dv\,dx,
\]
are the conserved global mean and temperature respectively. We may express the global Maxwellian as follows
\[
f^{\infty}(x,v) =  \left(\dfrac{1}{2\pi\sigma^2} \right)^{d/2}\exp\left\{-\dfrac{|v-u|^2}{2\sigma^2}\right\},
\] 
where we assumed that the total mass is normalized to one. The study of equilibration rates on the velocity space, given a initial distribution $f_0$, is a key problem in kinetic theory, see e.g.^^>\cite{BR,CT,T0}. The trends to equilibrium are  determined in terms of the Boltzmann H-functional 
\[
H[f](t) = \int_{\mathbb R^{2d}} f\log f dv\,dx. 
\] 
In particular, starting from an initial distribution with finite entropy, $f(x,v,t)$ converges in relative entropy to the equilibrium solution exponentially fast. 

\subsection{Vlasov-Fokker-Planck models in collective dynamics} 
\label{sect:swarming}
Vlasov-Fokker-Planck-type equations have been used more recently to understand long-rage interactions in collective phenomena. In this direction, a key role has been played by the so-called swarming dynamics in which large systems of agents tend to share the same velocity for large times, see^^>\cite{CS}. The next example focuses on a kinetic equation for swarming dynamics. The kinetic equation can be obtained from \eqref{eq:VFP} with the choice 
\[
P(x,y,v,w) = \dfrac{K(v-w)}{\left(\xi^2 + |x-y|^2\right)^\beta}, 
\]
such that the nonlocal drift assumes the form 
\[
\mathcal B[f](x,v,t) = \int_{\mathbb R^{2d}}\dfrac{K(v-w)}{\left(\xi^2 + |x-y|^2\right)^\beta}f(y,w,t)dw\,dy.
\]
The resulting PDE corresponds to the mean-field limit of the well-known Cucker-Smale model^^>\cite{HL,HT,CFRT,CFTV,ST}. Several results provide conditions for the asymptotic alignment of velocities under suitable assumptions on the coefficients of the model. In particular, if $\beta<1/2$ we have the large time collapse of the support of the emerging Maxwellian distribution and therefore asymptotic alignment of the agents' velocities^^>\cite{FK}.  

The emergence of collective structures by means of space-dependent interaction forces can be also observed in multi-agent models for milling^^>\cite{DO,CKR} and synchronization^^>\cite{CHY}. 

\subsection{Alignment models in consensus dynamics}\label{sect:2.3}
The mathematical modelling of consensus formation has seen a growing interest in recent years^^>\cite{GP}. Classical methods of statistical mechanics served as a powerful basis to model interactions  in cooperative multi-agent systems and to reduce the complexity linked to interaction forces.   
The study of alignment dynamics has produced a heterogeneity of models, see^^>\cite{APZ17,DW,FPTT,GP,HK,MT,T} and the references therein. More recently these approaches have found interesting applications in optimization and data science^^>\cite{CBO,HPV}.

We consider the bounded confidence model for opinion formation
\begin{equation}
\label{eq:BCMF}
\partial_t f({v},t) = \partial_{{v}} \left[ \int_{[-1,1]}P(| v- w|)({v}-{w})f({w},t)d{w}f({v},t)\right], \quad {v} \in  [-1,1],
\end{equation}
where
\begin{equation}
\label{eq:BCinter}
P(| v- w|) =  \chi(| v- w|\le \delta)
\end{equation}
being $\chi(\cdot)$ the indicator function and being $\delta >0$ a given threshold. The large time behaviour of the model is a sum of Dirac delta distribution which express clusterization of the society towards a finite number of asymptotic opinions. 

The corresponding version in presence of noise is given by 
\begin{equation}
\label{eq:BCstoch}
\partial_t f({v},t) = \partial_{{v}} \left[ \int_{[-1,1]}P(| v- w|)({v}-{w})f({w},t)d{w}f({v},t)\right] + \sigma^2 \partial_v^2 (D^2(v) f(v,t)), 
\end{equation}
where $v \in [-1,1]$ and where $D(v) = \sqrt{1-v^2}$. In the simplified scenario where $\delta = 2$ we get $P\equiv 1$ and we may compute the large time behaviour of the model which corresponds to the Beta distribution
\[
f^\infty(v) = \dfrac{(1+v)^{\frac{1+m}{\sigma^2}-1} (1-v)^{\frac{1-m}{\sigma^2}-1}}{2^{\frac{2}{\sigma^2}-1}\textrm{B}\left(\frac{1+m}{\sigma^2},\frac{1-m}{\sigma^2} \right)},
\]
where $m = \int_{-1}^1vf(v,0)dv$ is the conserved mean opinion of the system. If $0<\delta<2$ the explicit computation of the asymptotic state is difficult to obtain. A possible strategy to get some insight on the large time trends is based on the construction of structure preserving numerical approximations \cite{PZ0}. 

\section{Reduced variance RBM}
Let $f = f(x,v,t): \mathbb R^{d}\times \mathbb R^{d} \times \mathbb R_+\to \mathbb R_+$ be a distribution function whose evolution is given by the Vlasov-Fokker-Planck model with nonlocal flux defined in \eqref{eq:VFP} with a constant diffusion $D(v)=\sigma$. The evolution of observable quantities is obtained by rewriting the equation in weak form. 

Let $\varphi(\cdot,\cdot)$ be a test function, then the Vlasov-Fokker-Planck model defined in \eqref{eq:VFP} can be written in weak form as follows
\[
\begin{split}
&\dfrac{d}{dt}\int_{\mathbb R^{2d} } \varphi(x,v)f(x,v,t)dv\,dx + \int_{\mathbb R^{2d}} v\cdot \nabla_x\varphi(x,v) \, f(x,v,t)dv\,dx  \\
&\quad =  \int_{\mathbb R^{2d}  } \varphi(x,v)\nabla_v\cdot \left[ \mathcal B[f](x,v,t)f(x,v,t) + \sigma^2 \nabla_v f(x,v,t)\right]dv\,dx.
\end{split}
\]
Hence, by choosing $\varphi(x,v) \equiv 1$ we may observe that the total mass is conserved since
\[
\dfrac{d}{dt} \int_{\mathbb R^{2d}}f(x,v,t)dv\,dx  = 0.
\] 
The positivity for all $t\ge0$ of the kinetic distribution $f(x,v,t)$ may be proven under suitable assumptions on the drift term and the diffusion function. Furthermore, for $\varphi(v) = v$ we have that the evolution of the mean velocity is given by
\[
\dfrac{d}{dt} \int_{\mathbb R^{2d}}vf(x,v,t)dv \,dx=  \int_{\mathbb R^{2d}} \mathcal B[f](x,v,t)f(x,v,t)dv\,dx.
\]
Therefore, the mean velocity is conserved if $P$ is a symmetric function in both $x\in \mathbb R^{d}$ and $v \in \mathbb R^{d}$. 

Given a set of sample positions and velocities $\{(x_i,v_i)\}_{i=1}^N$ at time $t\ge0$, the kinetic distribution $f(x,v,t)$ is recovered from the empirical density function 
\begin{equation}
\label{eq:fN}
f_N(x,v,t) = \dfrac{1}{N} \sum_{i=1}^N \delta(x-x_i(t))\otimes \delta(v-v_i(t)), 
\end{equation}
where $\delta(\cdot)$ is the Dirac delta function and the particles evolution follows \eqref{eq:system}. Therefore, for any test function $\varphi(x,v)$, if we denote by 
\[
(\varphi,f)(t) = \int_{\mathbb R^d} \varphi(x,v)f(x,v,t)dv,
\]
we have
\[
(\varphi,f_N)(t) = \dfrac{1}{N} \sum_{i=1}^N \varphi(x_i(t),v_i(t)).
\]
If $f(x,v,t)$ is a probability density function $\int_{\mathbb R^{2d}}f(x,v,t)dv\,dx$, we have 
\[
(\varphi,f) = \mathbb E_{X,V}[\varphi],
\]
being $\mathbb E_{X,V}[\varphi]$ the expectation of the observable quantity $\varphi$.
 The following result holds^^>\cite{Caf}
\begin{theorem}\label{th:1}
The root-mean square error is such that for each $t\ge0$
\[
\mathbb E_{X,V}\left[\left|(\varphi,f)-(\varphi,f_N) \right|^2\right]^{1/2} = \dfrac{\sigma_{\varphi}}{N^{1/2}},
\]
where $\sigma_\varphi^2 = \textrm{Var}(\varphi)$ and
\[
\textrm{Var}(\varphi) = \int_{\mathbb R^{2d}}(\varphi(x,v)-(\varphi,f))^2 f(x,v,t)dv\,dx.
\]
\end{theorem}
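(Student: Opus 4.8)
The statement is the classical Monte Carlo error estimate, and the plan is to exploit the fact that $(\varphi,f_N)(t)$ is an empirical average of independent, identically distributed random variables. The implicit hypothesis I would make explicit at the outset is that the sample $\{(x_i(t),v_i(t))\}_{i=1}^N$ consists of $N$ independent draws from the law with density $f(\cdot,\cdot,t)$; under this assumption each $\varphi(x_i(t),v_i(t))$ is a copy of a single random variable $\varphi(X,V)$ with $(X,V)\sim f(\cdot,\cdot,t)$.

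First I would record that this estimator is unbiased: by linearity of expectation,
\[
\mathbb E_{X,V}\big[(\varphi,f_N)(t)\big] = \frac{1}{N}\sum_{i=1}^N \mathbb E_{X,V}\big[\varphi(x_i(t),v_i(t))\big] = \mathbb E_{X,V}[\varphi] = (\varphi,f)(t),
\]
where the last two equalities use that the $(x_i,v_i)$ are identically distributed according to $f$ and the identification $(\varphi,f)=\mathbb E_{X,V}[\varphi]$ stated just above the theorem. Consequently the mean square error coincides with the variance of the estimator,
\[
\mathbb E_{X,V}\left[\left|(\varphi,f)-(\varphi,f_N)\right|^2\right] = \mathbb V\mathrm{ar}\big((\varphi,f_N)(t)\big).
\]

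Next I would expand the variance of the sum using independence: since the $\varphi(x_i(t),v_i(t))$ are independent, all cross-covariances vanish, so
\[
\mathbb V\mathrm{ar}\left(\frac{1}{N}\sum_{i=1}^N \varphi(x_i(t),v_i(t))\right) = \frac{1}{N^2}\sum_{i=1}^N \mathbb V\mathrm{ar}\big(\varphi(x_i(t),v_i(t))\big) = \frac{1}{N^2}\cdot N\,\sigma_\varphi^2 = \frac{\sigma_\varphi^2}{N},
\]
where $\sigma_\varphi^2=\mathrm{Var}(\varphi)=\int_{\mathbb R^{2d}}(\varphi(x,v)-(\varphi,f))^2 f(x,v,t)\,dv\,dx$ by definition. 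Taking square roots gives the claimed identity $\mathbb E_{X,V}[|(\varphi,f)-(\varphi,f_N)|^2]^{1/2}=\sigma_\varphi/N^{1/2}$.

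There is no genuine analytic obstacle here; the only subtlety worth flagging is that the result is an exact identity rather than a bound, which forces the i.i.d. assumption on the particle sample to be used in full strength (independence for the vanishing of cross terms, identical distribution for the common variance $\sigma_\varphi^2$ and for unbiasedness). If one only knew the samples were exchangeable or weakly correlated — which is precisely the situation the random batch method creates — the cross-covariances would no longer vanish and one would obtain an inequality or an extra correlation term instead; I would mention this explicitly, since it motivates the variance-reduction strategy developed in the rest of the section.
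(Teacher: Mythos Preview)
Your argument is correct and is exactly the standard Monte Carlo computation: unbiasedness plus independence gives $\mathbb V\mathrm{ar}((\varphi,f_N))=\sigma_\varphi^2/N$, hence the stated root-mean-square error. The paper does not actually supply a proof of this theorem; it states the result and attributes it to Caflisch~\cite{Caf}, so there is nothing to compare your approach against beyond noting that what you wrote is the classical derivation that reference contains.
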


Since $f$ is a probability density, we observe that the nonlocal drift operator may be thought as an expectation with respect to the kinetic distribution $f(x,v,t)$ and in particular as the evaluation of the expected quantity of $P(x,y,v,w)(w-v)$ with respect to the pair $(y,w)$. Hence, we have
\[
\nabla_v \cdot \left(\mathcal B[f](x,v,t)f(x,v,t)\right) = \nabla_v \cdot\left( \mathbb E_f[P(x,\cdot,v,\cdot)(v-\cdot)]f(x,v,t)\right),
\]
where $\mathbb E_f[\cdot]$ is the expectation with respect to the probability density $f$ and is defined as 
\[
\mathbb E_f[g(x,y,v,w,t)] =  \int_{\mathbb R^{d}\times \mathbb R^{d}} g(x,y,v,w,t) f(y,w,t) dy\,dw.
\]
Therefore, the initial model in \eqref{eq:VFP} is equivalently written as 
\[
\partial_t f + v\cdot \nabla_x f = \nabla_v \cdot\left[ \mathbb E_f[P(x,\cdot,v,\cdot)(v-\cdot)]f + \sigma^2 \nabla_v f\right]
\]

\subsection{Standard RBM method}

Equation \eqref{eq:VFP} can be obtained as the mean-field limit of the particle system \eqref{eq:system}. Let us consider the empirical distribution \eqref{eq:fN} and rewrite 
the second equation in \eqref{eq:system} as  follows
\[
d v_i = \mathbb E_{f^N}[ P(x_i,\cdot,v_i,\cdot)(\cdot-v_i)]dt + \sqrt{2\sigma^2} dW_i^t,
\] 
being 
\[
 \mathbb E_{f^N}[P(x_i,\cdot,v_i,\cdot)(\cdot-v_i)] = \dfrac{1}{N} \sum_{j=1}^N P(x_i,x_j,v_i,v_j)(v_j-v_i).
\]
For any $N\gg0$ the we may observe that
\[
\mathbb E_f[P(x,\cdot,v,\cdot)(v-\cdot)] \approx \mathbb E_{f^N}[P(x_i,\cdot,v_i,\cdot)(\cdot-v_i)].
\]
As a result, we obtain  a particle dynamics with complexity $O(N^2)$. The random batch idea is based on the use of subsets $S_M$ of samples of size $M \ll N$ sampled uniformly from the set of indices $\{1,\dots,N\}$ to further approximate
\be
\label{eq:sumRBM}
\E_{f^N}[P(x_i,\cdot,v_i,\cdot)(\cdot-v_i)] \approx \frac1{M} \sum_{j\in S_M} P(x_i,x_j,v_i,v_j)(v_j-v_i),
\ee  
for all $i=1,\ldots,M$. This fast summation strategy has an overall cost of $O(MN)$ instead of $O(N^2)$ and therefore substantially alleviate computational burden linked to the particle approach. In the following, we concentrate on the dynamics of the particle system given by the following system of SDEs for $(x_i,v_i)$
\begin{equation}
\label{eq:system_RBM}
\begin{cases}
dx_i = v_idt, \\
dv_i = \dfrac{1}{M} \displaystyle\sum_{j \in S_M} P(x_i,x_j,v_i,v_j)(v_j-v_i)dt + \sqrt{2\sigma^2}dW_i^t
\end{cases}
\end{equation}
such that the empirical measure
\[
{f}_N^M(x,v,t) = \dfrac{1}{N} \sum_{i=1}^N \delta(x-x_i(t))\otimes \delta(v-v_i(t)) 
\]
converges towards $f(x,v,t)$ solution to \eqref{eq:VFP}.  Indeed, the following result can be established

\begin{theorem}\label{th:RBM}
The root-mean square error of the RBM method is such that for each $t\ge0$
\[
\mathbb E_{X,V}\left[ \left|(\varphi,f) - (\varphi,f_N^M)\right|^2 \right]^{1/2} \le \dfrac{\sigma_\varphi}{\sqrt{N}} +  \theta_\varphi\sqrt{\dfrac{1}{M}-\dfrac{1}{N}},
\]
where 
\[
 \sigma_\varphi^2 = \int_{\mathbb R^{2d}}(\varphi(x,v)-(\varphi,f))^2 f(x,v,t)dv\,dx
\]
and 
\[
 \theta_\varphi^2 = \int_{\mathbb R^{2d}}(\varphi(x,v)-(\varphi,f_N))^2 f_N(x,v,t)dv\,dx
\]
\end{theorem}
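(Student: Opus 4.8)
The plan is to split the error into a mean-field sampling part and a random-batch part. On a common probability space, run the full $O(N^2)$ system \eqref{eq:system} and the RBM system \eqref{eq:system_RBM} with the \emph{same} initial data $\{(x_i(0),v_i(0))\}_{i=1}^N$ and the \emph{same} Brownian motions $\{W_i^t\}$, so that the only extra source of randomness in \eqref{eq:system_RBM} is the batch selection; let $f_N$ denote the empirical measure of the full system. By the triangle inequality in $L^2$ with respect to $\mathbb E_{X,V}$,
\[
\mathbb E_{X,V}\!\left[\left|(\varphi,f)-(\varphi,f_N^M)\right|^2\right]^{1/2} \le \mathbb E_{X,V}\!\left[\left|(\varphi,f)-(\varphi,f_N)\right|^2\right]^{1/2} + \mathbb E_{X,V}\!\left[\left|(\varphi,f_N)-(\varphi,f_N^M)\right|^2\right]^{1/2}.
\]
The first term is $\sigma_\varphi/\sqrt N$ by Theorem \ref{th:1}, so it remains to bound the second one by $\theta_\varphi\sqrt{1/M-1/N}$.

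For the batch term I would condition on the filtration $\mathcal F$ generated by the particle configurations at the successive batch-reshuffling times. Conditionally on $\mathcal F$, the random batch sum in \eqref{eq:sumRBM} is the average of $M$ values drawn uniformly \emph{without replacement} from the $N$ summands $\{P(x_i,x_j,v_i,v_j)(v_j-v_i)\}_{j=1}^N$; it is therefore an unbiased estimator of the full average, and its conditional variance equals the empirical variance of those $N$ values times the finite-population correction factor $\tfrac1M-\tfrac1N$ (up to the customary $\tfrac{N}{N-1}$ factor). This is exactly why the batch contribution carries the factor $\sqrt{1/M-1/N}$ and disappears when $M=N$. It then remains to transfer this unbiased, controlled-variance perturbation of the drift onto the observable: since the coupled systems \eqref{eq:system} and \eqref{eq:system_RBM} share the same Brownian paths, their difference solves a \emph{pathwise} ODE forced only by the drift discrepancy, and a Duhamel-type representation together with a Gr\"onwall estimate (using Lipschitz regularity and boundedness of $P$) bounds $\mathbb E_{X,V}[\,|(\varphi,f_N)-(\varphi,f_N^M)|^2\mid\mathcal F\,]^{1/2}$ by a constant times $\sqrt{1/M-1/N}$ times the empirical standard deviation of $\varphi$ along the flow, that is, by $\theta_\varphi\sqrt{1/M-1/N}$. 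Taking expectations (tower property) and combining with the triangle inequality yields the statement.

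The main obstacle is this second step. The conditional one-step variance identity for sampling without replacement is elementary; the delicate points are, first, that the batch perturbations, re-drawn independently at every step, must not be amplified through the nonlinear coupling — this is precisely where the Gr\"onwall/stability estimate and the hypotheses on $P$ (and the constant diffusion $D(v)=\sigma$) are used to keep the propagated constant under control — and second, that the accumulated variance prefactor can genuinely be identified with $\theta_\varphi^2$ rather than merely bounded by a kernel-dependent quantity; this identification is exact for the natural consensus-type observables (e.g.\ $\varphi(x,v)=v$ with $P\equiv1$), and in general should be read at leading order in $N$, absorbing the $\tfrac{N}{N-1}$ correction and the fact that $\theta_\varphi$ is itself empirical. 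A related, milder subtlety — shared with Theorem \ref{th:1} — is that invoking that theorem for the first term implicitly treats the time-$t$ empirical measure of \eqref{eq:system} as an i.i.d.\ sample from $f(\cdot,t)$, i.e.\ relies on a propagation-of-chaos statement, which here plays the role of a leading-order estimate.
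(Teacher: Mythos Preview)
Your decomposition is the same as the paper's: split $(\varphi,f)-(\varphi,f_N^M)$ through $(\varphi,f_N)$, handle the first piece by Theorem~\ref{th:1}, and bound the second by the sampling-without-replacement variance $\theta_\varphi^2(1/M-1/N)$. Two differences are worth flagging. First, you use Minkowski's inequality in $L^2$, which gives the stated constants directly; the paper instead uses $|a+b|^2\le 2|a|^2+2|b|^2$ and so its own proof actually lands with an extra factor $\sqrt{2}$ on each term. Second, for the batch estimate the paper does not argue at all: it simply invokes the strategy of \cite{CZ} to assert $\mathbb E_{X,V}\bigl[|(\varphi,f_N)-(\varphi,f_N^M)|^2\bigr]^{1/2}\le\theta_\varphi\sqrt{1/M-1/N}$ as a black box. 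Your detailed sketch --- conditioning on the configuration, identifying the finite-population correction, then propagating the drift perturbation through the coupled dynamics via Duhamel/Gr\"onwall --- goes well beyond what the paper supplies. Your caveats are apt: a genuine Gr\"onwall step would introduce a time-dependent prefactor, and the exact identification of the propagated constant with $\theta_\varphi$ is clean only for special observables. The paper's proof does not engage with these points; it treats the batch bound as imported from \cite{CZ}, so your analysis is, if anything, more scrupulous about where the difficulties lie.
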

\begin{proof}
For any test function $\varphi$ we have
\[
\begin{split}
&\mathbb E_{X,V}\left[\left| (\varphi,f)-(\varphi,f_N^M) \right|^2\right]\le\\
&\quad   2\mathbb E_{X,V}\left[\left| (\varphi,f)-(\varphi,f_N) \right|^2 \right] +  2\mathbb E_{X,V}\left[\left| (\varphi,f_N)-(\varphi,f_N^M) \right|^2 \right].  \\
\end{split}\]
Hence, we may pursue the same strategy presented in \cite{CZ} to get the following estimate
\[
\mathbb E_{X,V}\left[\left| (\varphi,f_N)-(\varphi,f_N^M) \right|^2 \right]^{1/2} \le  \theta_\varphi \sqrt{\dfrac{1}{M}-\dfrac{1}{N}},
\]
from which we get thanks to Theorem \ref{th:1} 
\[
\mathbb E_{X,V}\left[\left| (\varphi,f)-(\varphi,f_N^M) \right|^2\right]^{1/2}
\le \sqrt{2}\dfrac{\sigma_\varphi}{\sqrt{N}} + \sqrt{2}\theta_\varphi \sqrt{\dfrac{1}{M}-\dfrac{1}{N}}.
\]
\end{proof}
From the above theorem we can conclude that the computation of the average defined in \eqref{eq:sumRBM} approximates the original one with an accuracy $O((1/M-1/N)^{1/2})$.

At the computational level, the RBM method, as it has been presented in \cite{AP,JLL,CPZ}, can be summarized as in Algorithm \ref{alg:RBM}.

\begin{algorithm}\label{alg:RBM}
    \caption{Random Batch Method}
    \begin{algorithmic}[1]
    \State Sample $N$ particles from the initial distribution $f(x,v,0)$
     \State Set $M\in \mathbb N$, $1<M<N$
 \For {each particle $i$}
 \State{ select a random batch $\mathcal S_M$}
\State update $(x_i,v_i)$ by solving a time discretization of \eqref{eq:system_RBM} 
\EndFor
\State Reconstruct the kinetic density using a kernel density estimator of \eqref{eq:fN}.
\end{algorithmic}
\end{algorithm}

\subsection{The reduced variance strategy}

In this section we want to define a suitable strategy to reduce the variance of the RBM method. To this end, we will develop a control variate approach which is based on the construction of simpler linear models which define a compatible asymptotic distribution with respect to the full model. We describe the method using a nonlocal PDEs of Vlasov-Fokker-Planck-type introduced in \eqref{eq:VFP}.  

We define a new drift term as follows
\begin{equation}
\label{eq:Plambda}
P^\lambda(x,y,v,w)(w-v) = P(x,y,v,w)(w-v)-\lambda (\tilde P(x,v)(w-v) - \tilde Q(f)(x,v,t)),
\end{equation}
where $\lambda \in \mathbb R$ and $\tilde Q(x,v,t)$ is a reduced complexity collision operator having the form
\[
\begin{split}
\tilde Q(f)(x,v,t)  &= \int_{\mathbb R^{2d}} \tilde P(x,v)(w-v)f(y,w,t)dy\,dw\\
&=\tilde P(x,v)(U-v),
\end{split}\]
being $\tilde P(x,v)\ge0$ a new interaction function. Hence, \eqref{eq:Plambda} may be rewritten as
\be
\begin{split}
P^\lambda(x,y,v,w)(w-v) &= P(x,y,v,w)(w-v)-\lambda (\tilde P(x,v)(w-v) - \tilde P(x,v)(U-v)) \\
&= P(x,y,v,w)(w-v)-\lambda \tilde P(x,v)(w-U).
\end{split}
\ee
We can notice that we have
\[
\E_f[P^\lambda(x,\cdot,v,\cdot)(\cdot-v)] = \E_f[P(x,\cdot,v,\cdot)(\cdot-v)]=Q(f)(x,v),
\]
so that the solution of our original problem can be obtained by the evaluation of the expectation of the new term $P^\lambda(x,\cdot,v,\cdot)(\cdot-v)$. In order to determine the optimal value of the scalar value $\lambda$ a common strategy relies on the minimization of the variance \cite{DP,DP2020,G}. In particular, the variance of the above expression is given by
\[
\V_f(P^\lambda(x,\cdot,v,\cdot)(\cdot-v))=\int_{\R^{2d}} (P^\lambda(x,y,v,w)(w-v)-Q(f)(x,v))^2 f(y,w)\,dw\,dy.
\]
From \eqref{eq:Plambda} we get
\[
\begin{split}
\V(P^\lambda(x,\cdot,v,\cdot)(\cdot-v)) =& \V(P(x,\cdot,v,\cdot)(\cdot-v)) + \lambda^2\V(\tilde P(x,v)(\cdot-U))\\
&-2\lambda\C(P(x,\cdot,v,\cdot)(\cdot-v),\tilde P(x,v)(\cdot-U)), 
\end{split}
\]
where
\[
\begin{split}
\C_f(P(x,\cdot,v,\cdot)(\cdot-v),\tilde P(x,v)(\cdot-U))=\int_{\R^{2d}} &(P(x,y,v,w)(w-v)-Q(f)(x,v))\\
&(P(x,v)(w-U)-\tilde Q(f)(x,v)) f(y,w)\,dw\,dy.
\end{split}
\]
We can minimize the variance by direct differentiation with respect to $\lambda$ and obtain the optimal value. The following result holds
\begin{theorem}
The quantity 
\be
\label{eq:lambdas}
\lambda^* = \frac{\C_f(P(x,\cdot,v,\cdot)(\cdot-v),\tilde P(x,v)(\cdot-U))}{\V_f(\tilde P(x,v)(\cdot-U))}.
\ee
minimizes the variance of $P^\lambda(x,y,v,w)(w-v)$ and is such that 
\[
\V(P^{\lambda^*}(x,y,v,w)(w-v)) = (1-\rho^2)\V(P(x,y,v,w)(w-v)),
\]
being $\rho\in [-1,1]$ the Pearson's correlation coefficient. 
\end{theorem}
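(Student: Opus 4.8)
The plan is to read this as the standard control-variate computation in which the quantity $P^\lambda(x,y,v,w)(w-v)$ is regarded as a random variable in the integration pair $(y,w)\sim f(\cdot,\cdot,t)$, with $(x,v)$ and $t$ held fixed. Abbreviate $A=P(x,\cdot,v,\cdot)(\cdot-v)$ and $B=\tilde P(x,v)(\cdot-U)$, so the new drift is $A-\lambda B$. The first step is to record that $B$ has zero $f$-mean: writing $\tilde P(x,v)(w-U)=\tilde P(x,v)(w-v)-\tilde P(x,v)(U-v)$ and using the definition of $\tilde Q$ gives $\E_f[B]=\tilde Q(f)(x,v)-\tilde Q(f)(x,v)=0$. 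This is exactly why $\E_f[A-\lambda B]=\E_f[A]=Q(f)(x,v)$ for every $\lambda$, as already noted after \eqref{eq:Plambda}. Consequently the relevant quantity is $\V_f(A-\lambda B)$, and I expand it by bilinearity of the covariance as $\V_f(A)-2\lambda\,\C_f(A,B)+\lambda^2\,\V_f(B)$, which is precisely the identity displayed just before the statement.

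The second step is the one-variable optimization. The expansion above is a quadratic in $\lambda$ with positive leading coefficient $\V_f(B)$ (assuming $B$ is not $f$-almost surely constant, the implicit nondegeneracy condition); differentiating and imposing $-2\C_f(A,B)+2\lambda\V_f(B)=0$ yields the unique global minimizer $\lambda^*=\C_f(A,B)/\V_f(B)$, which is \eqref{eq:lambdas}. The third step is to substitute $\lambda^*$ back in: the linear and quadratic terms collapse to $-\C_f(A,B)^2/\V_f(B)$, so that $\V_f(A-\lambda^*B)=\V_f(A)-\C_f(A,B)^2/\V_f(B)$. Introducing the Pearson correlation coefficient $\rho=\C_f(A,B)/\sqrt{\V_f(A)\V_f(B)}$, the subtracted term equals $\rho^2\,\V_f(A)$, hence $\V_f(P^{\lambda^*}(x,y,v,w)(w-v))=(1-\rho^2)\,\V_f(P(x,y,v,w)(w-v))$, which is the claimed identity; the bound $\rho\in[-1,1]$ is the Cauchy–Schwarz inequality applied to $A-\E_f[A]$ and $B-\E_f[B]$ in $L^2(f)$.

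There is no substantive obstacle here — this is the classical control-variate identity. The only points deserving a line of care are the nondegeneracy assumption $\V_f(B)>0$, needed both for $\lambda^*$ to be well defined and for the stationary point to be a strict minimum, and the tacit integrability hypotheses on $P$, $\tilde P$ and the moments of $f$ that make all the variances and covariances finite. Under these, the computation is exact for each fixed $(x,v,t)$, so $\lambda^*$ is in general a function $\lambda^*=\lambda^*(x,v,t)$ rather than a single scalar; this is worth flagging explicitly since the subsequent algorithm will in practice replace it by a computable surrogate.
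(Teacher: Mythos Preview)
Your argument is correct and matches the paper's approach: the paper does not give a detailed proof but only indicates ``minimize the variance by direct differentiation with respect to $\lambda$'', which is exactly what you carry out, together with the substitution that produces the factor $(1-\rho^2)$. Your remarks on the nondegeneracy assumption $\V_f(\tilde P(x,v)(\cdot-U))>0$ and on $\lambda^*$ being a function of $(x,v,t)$ are accurate refinements that the paper leaves implicit.
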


Assume now to evaluate in \eqref{eq:Plambda} the exact expectation $\tilde P(x,v)(U-v)$ of the surrogate linear model using $N$ samples
\[
\tilde P(x_i,v_i)(U-v_i) \approx \tilde P(x_i,v_i) (U_N-v_i)
\]
where 
\[
U_N = \frac1{N} \sum_{j=1}^N v_j.
\]
We can improve the standard Monte Carlo random batch estimate based on $M$ samples \eqref{eq:system_RBM} using 
\begin{equation}
\label{eq:rvRBM}
\begin{split}
\frac1{M} \sum_{j\in S_M} P^{\lambda_*}(x_i,x_j,v_i,v_j)(v_j-v_i) =& \frac1{M} \sum_{j\in S_M} P(x_i,x_j,v_i,v_j)(v_j-v_i)\\
&-\lambda_*\left(\tilde P(x_i,v_i) (U_M-U_N)\right).
\end{split}
\end{equation}
Hence, we get the following system of SDEs for $(x_i,v_i)$
\begin{equation}
\begin{cases}
dx_i = &v_idt, \\
dv_i = & \left[\dfrac1{M} \displaystyle\sum_{j\in S_M} P(x_i,x_j,v_i,v_j)(v_j-v_i)\right.\\
&\left.-\lambda_*\left(\tilde P(x_i,v_i) (U_M-U_N)\right)\right]dt + \sqrt{2\sigma^2 }dW_i^t,
\end{cases}
\end{equation}
where as usual $\{W_i\}$, $i=1,\dots,N$ defines a set of independent Wiener processes. The reduced variance strategy can be summarised as in Algorithm \ref{alg:rvRBM}.

\begin{algorithm}
    \caption{Reduced Variance Random Batch Method (rvRBM)}
    \begin{algorithmic}[1]
    \State Sample $N$ particles from the initial distribution $f(x,v,0)$
     \State Set $M\in \mathbb N$, $1<M<N$
    \State Chose $\tilde P(x_i,v_i)$ that defines a surrogate model 
    \For{each particle $i$}
    \State{select a random batch $\mathcal S_M$}
    \State compute $\lambda^*$ as in \eqref{eq:lambdas}
\State update $(x_i,v_i)$ by solving a time discretization of \eqref{eq:rvRBM} 
\EndFor
\State Reconstruct the kinetic density using a kernel density estimator of \eqref{eq:fN}.
\end{algorithmic}
\label{alg:rvRBM}
\end{algorithm}

\begin{remark}
We may observe that $\C_f(P(x,y,v,w)(w-v),\tilde P(x,v)(w-U))$ and $\V_f(\tilde P(x,v)(w-U))$ defining $\lambda^*$ are not known are must be estimated starting from the sample of size $N$. In particular, the following unbiased estimators can be defined
\[
\begin{split}
&\mathbb E_{f^N}(\tilde P(x,v)(w-U)) = \dfrac{1}{N}\sum_{i=1}^N \tilde P(x,v)(w_i-U_N),\\
&\V_{f^N}(\tilde P(x,v)(w-U)) \\
&\qquad= \dfrac{1}{N-1}\sum_{i=1}^N \left(\tilde P(x,v)(w_i-U_N) - \mathbb E_{f^N}(\tilde P(x,v)(w-U_N))  \right)^2, \\
&\C_{f^N}(P(x,y,v,w)(w-v),\tilde P(x,v)(w-U_N)) \\
&\qquad= \dfrac{1}{N-1}\sum_{i=1}^N  \left(\tilde P(x,v)(w_i-U_N) - \mathbb E_{f^N}(\tilde P(x,v)(w_i-U_N))  \right) \\
&\qquad\quad \left(P(x,y_i,v,w_i)(w_i-v) - \mathbb E_{f^N}( P(x,y_i,v,w_i)(w_i-v))  \right), 
\end{split}\]
which lead to the estimated optimal $\lambda^*$ as 
\[
\lambda^*_N = \frac{\C_{f^N}(P(x,y,v,w)(w-v),\tilde P(x,v)(w-U_N))}{\V_{f^N}(\tilde P(x,v)(w-U_N))}.
\]
\end{remark}

\begin{remark}\label{rem:lambdak}
If we can consider a linear surrogate operator, i.e. $\tilde Q\left(\sum_{k=1}^K f_k\right) = \sum_{k=1}^K\tilde Q(f_k)$, where
\[
\tilde Q(f_k)(x,v,t) = \tilde P(x,v)(U_k-v), \qquad k = 1,2,
\]
and $U_k = \int_{\mathbb R^{2d}}vf_k(x,v,t)dvdx$ with $\sum_{k=1}^KU_k= U$, we can define 
\[
P^{\lambda_k}(x,y,v,w)(w-v) = P(x,y,v,w)(w-v) -\lambda_k\tilde P(x,v)(w-U_k),
\]
which leads to sequence of optimal values
\begin{equation}
\label{eq:lambdak}
\lambda^*_k = \frac{\C_f(P(x,\cdot,v,\cdot)(\cdot-v),\tilde P(x,v)(\cdot-U_k))}{\V_f(\tilde P(x,v)(\cdot-U_k))}.
\end{equation}
\end{remark}

\section{Numerical tests}
In this section we present several numerical examples to show the performances of the introduced reduced variance RBM. In the following we will recall both deterministic and stochastic models.  In all the subsequent tests, we reconstruct the particle densities through a regularization of the empirical distribution $f^N(x,v,t) = \dfrac{1}{N} \sum_{i=1}^N \delta(x-x_i)\otimes \delta(v-v_i)$ based on a kernel density estimator obtained through the evaluation of $\left\langle \varphi,f^N \right \rangle$, being $\varphi(x_i,v_i)$ the normal density centered in $(x_i,v_i)$ and having fixed variance $\Sigma^2 = 10^{-5}$. 

In the following numerical tests we will focus on the definition of suitable RBM and rvRBM methods pointing the reader to the nonlocal PDE models introduced in Section \ref{sect:2}. 

\subsection{Test 1: alignment models}

\subsubsection{Test 1a: bounded confidence model}
We consider the bounded confidence opinion model introduced in Section \ref{sect:2.3} in the case $D\equiv 0$. We recall that it is possible to prove that \eqref{eq:BCMF} corresponds to the mean-field limit of the following particle system
\begin{equation}
\label{eq:BCparticles}
\dfrac{d}{dt}  v_i(t) = \dfrac{1}{N} \sum_{j=1}^NP(| v_i- v_j|) ( v_j- v_i), \qquad i = 1,\dots,N,
\end{equation}
where $P(|v_i-v_j|) = \chi(|v_i-v_j|\le \delta)$, $\delta >0$, see e.g. \cite{BL,PTTZ} and the references therein. Hence, the RBM method corresponds to the following reduced complexity model 
\begin{equation}
\label{eq:RBM_BC}
\dfrac{d}{dt}  v_{i}(t) = \dfrac{1}{M} \sum_{j\in \mathcal S_M} P(| v_{i}- v_{j}|) ( v_{j}- v_{i}), \qquad i = 1,\dots,N,
\end{equation}
being $\mathcal S_M$ a uniform subsample of size $M>1$ obtained from the $\{v_i\}_{i=1}^N$. A rvRBM method can be therefore obtained by considering in \eqref{eq:rvRBM} the interaction function $\tilde P(v_i)$. In particular, we will consider the cases $\tilde P(v_i)\equiv 1$ (denoted by case 1) and $\tilde P(v_i) = 1-v_i^2$ (denoted by case 2). 
Therefore the rvRBM method corresponds in considering the evolution of the following particle system
\begin{equation}
\label{eq:rvRBM_BC}
\dfrac{d}{dt}v_{i}(t) = \dfrac{1}{M} \sum_{j\in \mathcal{S}_M}P(|v_i-v_j|)(v_j-v_i) - \lambda_*\tilde P(v_i)(u_M-u_N),
\end{equation}
where 
\[
u_M = \dfrac{1}{M} \sum_{j\in \mathcal{S}_M} v_j, \qquad u_N = \dfrac{1}{N} \sum_{j=1}^N v_j,
\]
and $\lambda^* = \lambda^*(t)$ is computed as in \eqref{eq:lambdas}. 
We remark that, since $u_N$ is a conserved quantity of the model it is sufficient to compute $u_N$ from the initial sample, whereas $u_M$ is affected from the error induced by the uniform subsampling. 

In Figure \ref{fig:BC} we report the evolution of the particle density at time $t = 1$ and $t=5$ and by considering $M=5$ or $M=10$. As initial distribution we considered a sample of the uniform distribution of the interval $[-1,1]$, i.e.
\begin{equation}
\label{eq:f0BC}
f(v,0) =
\begin{cases}
\frac{1}{2} & v \in [-1,1], \\
0		&\textrm{elsewhere}. 
\end{cases}
\end{equation}
We compare the evolution of the particle density obtained with a standard RBM together with the one obtained with rvRBM, from which we may notice  good agreement in describing the transient behaviour of the distribution both in the case $\delta = 1$ (top row) and in the case $\delta = 0.5$ (bottom row). In this latter case the emerging equilibrium density displays clusterization towards the values $\pm \frac{1}{2}$. 

\begin{figure}
\centering
\subfigure[$t=1,M=5$]{
\includegraphics[scale = 0.17]{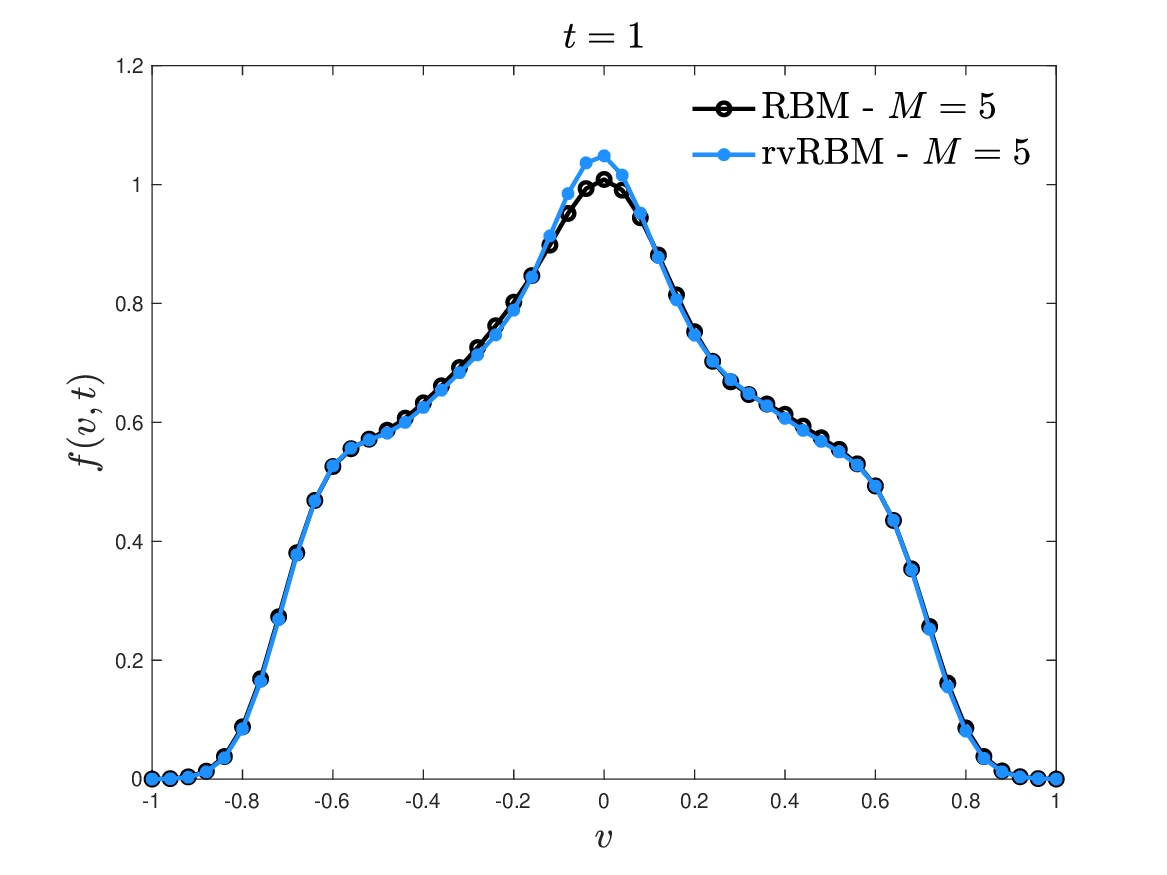}}
\subfigure[$t=1,M=10$]{
\includegraphics[scale = 0.17]{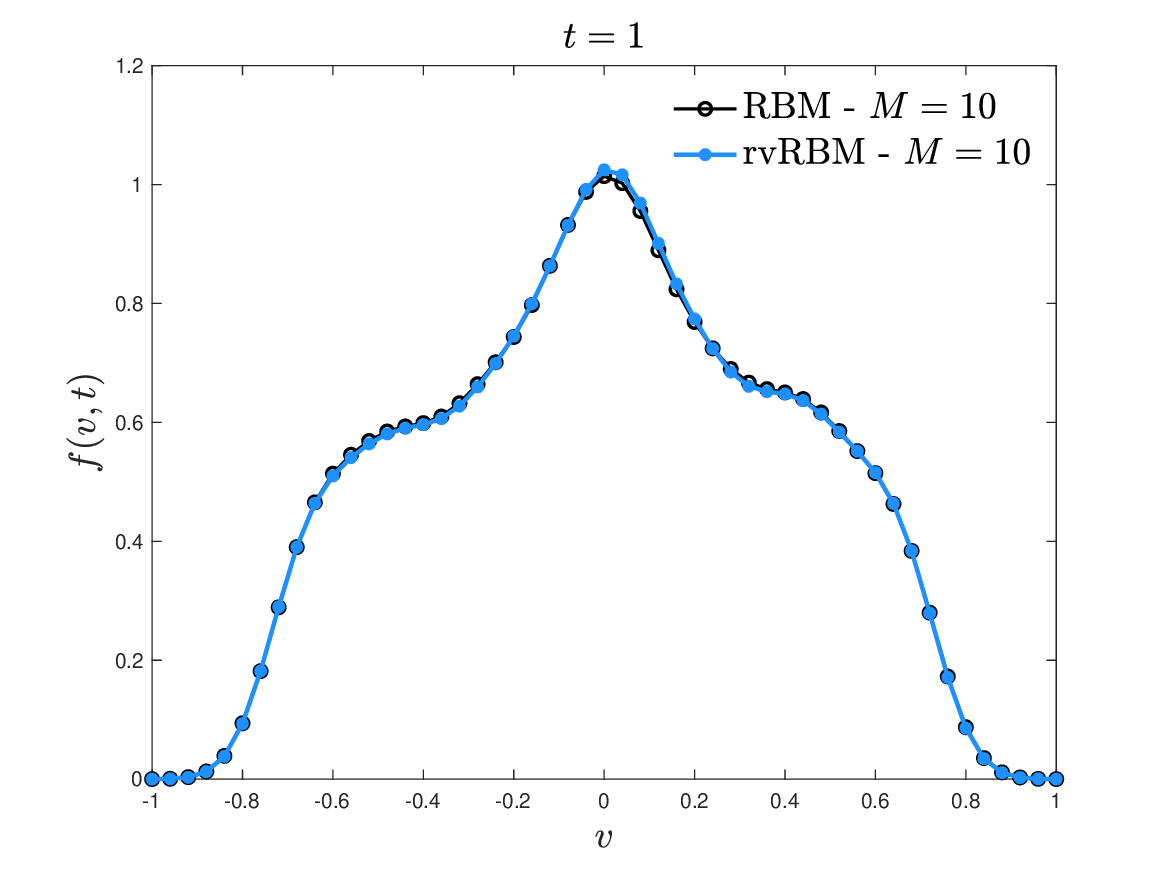} }
\subfigure[$t=5,M=5$]{
\includegraphics[scale = 0.17]{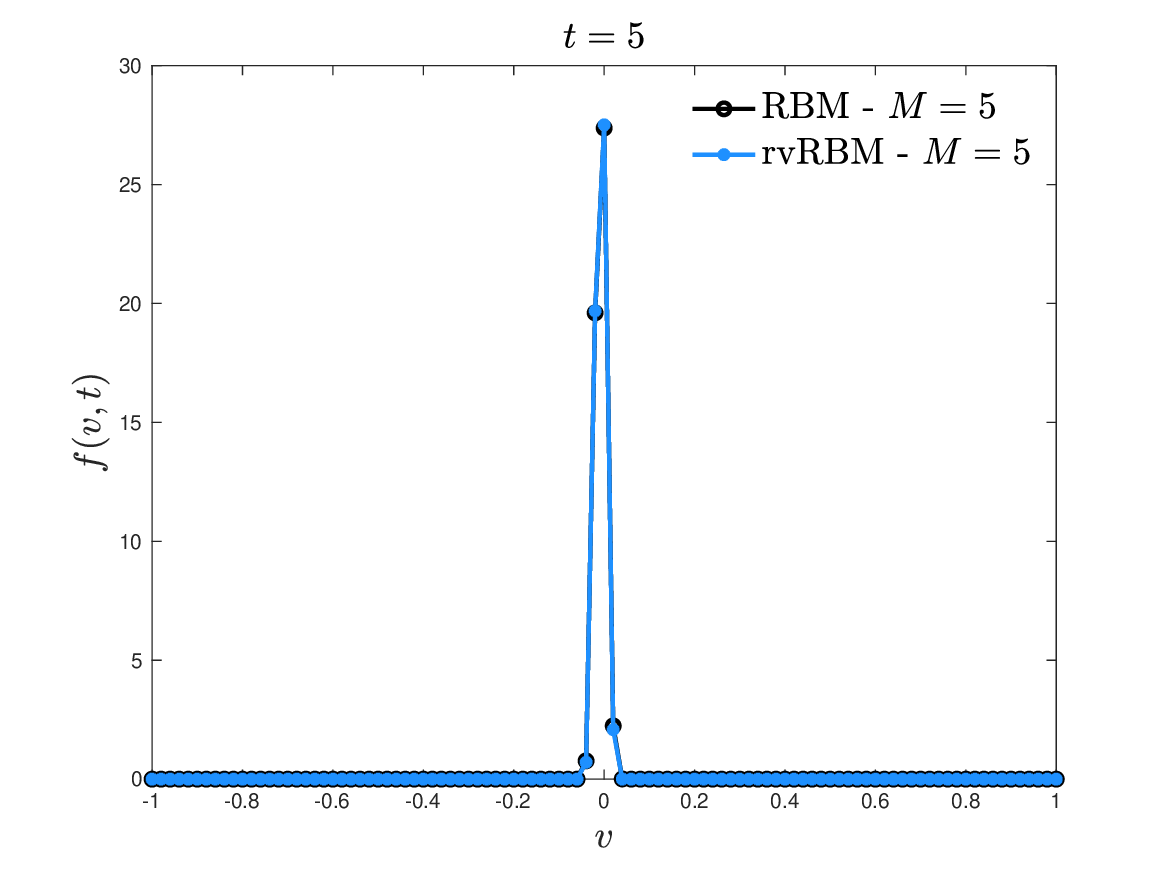}}
\subfigure[$t=5,M=10$]{
\includegraphics[scale = 0.17]{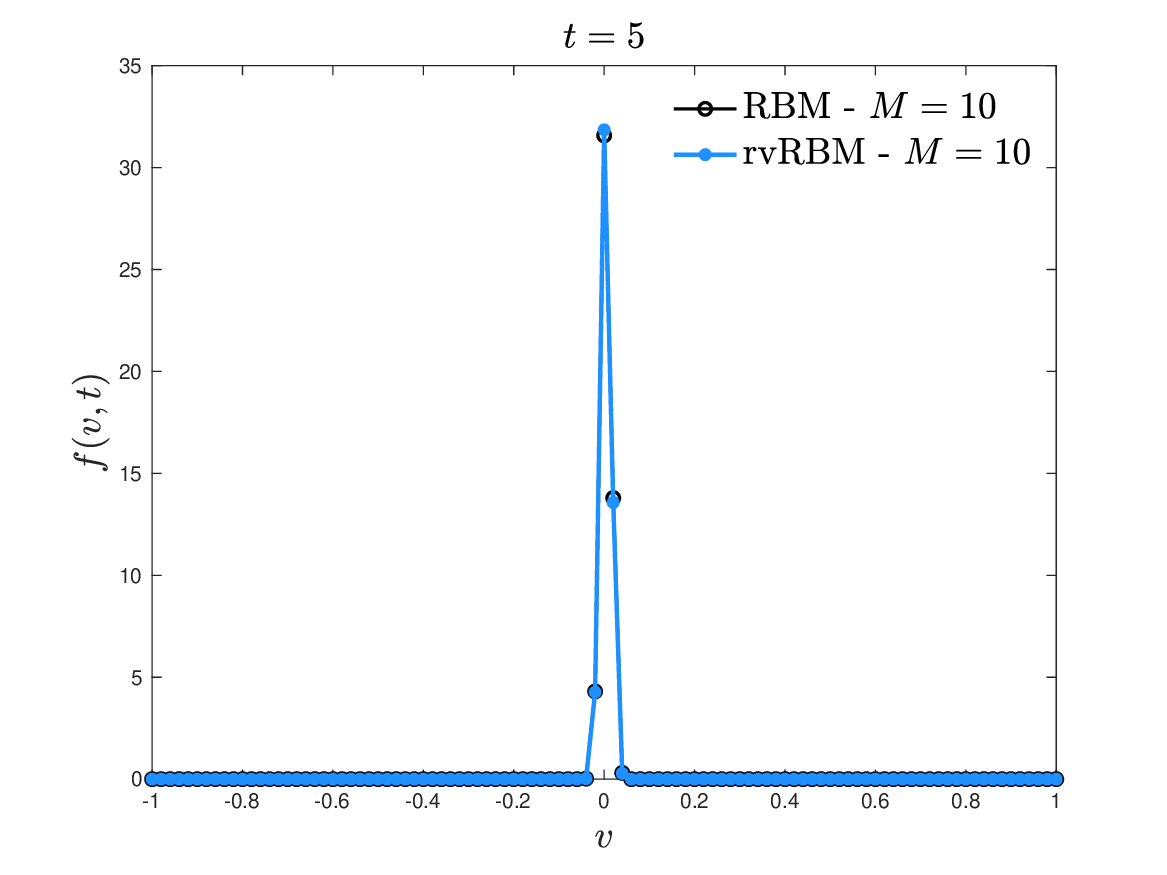}}\\
\subfigure[$t=1,M=5$]{
\includegraphics[scale = 0.17]{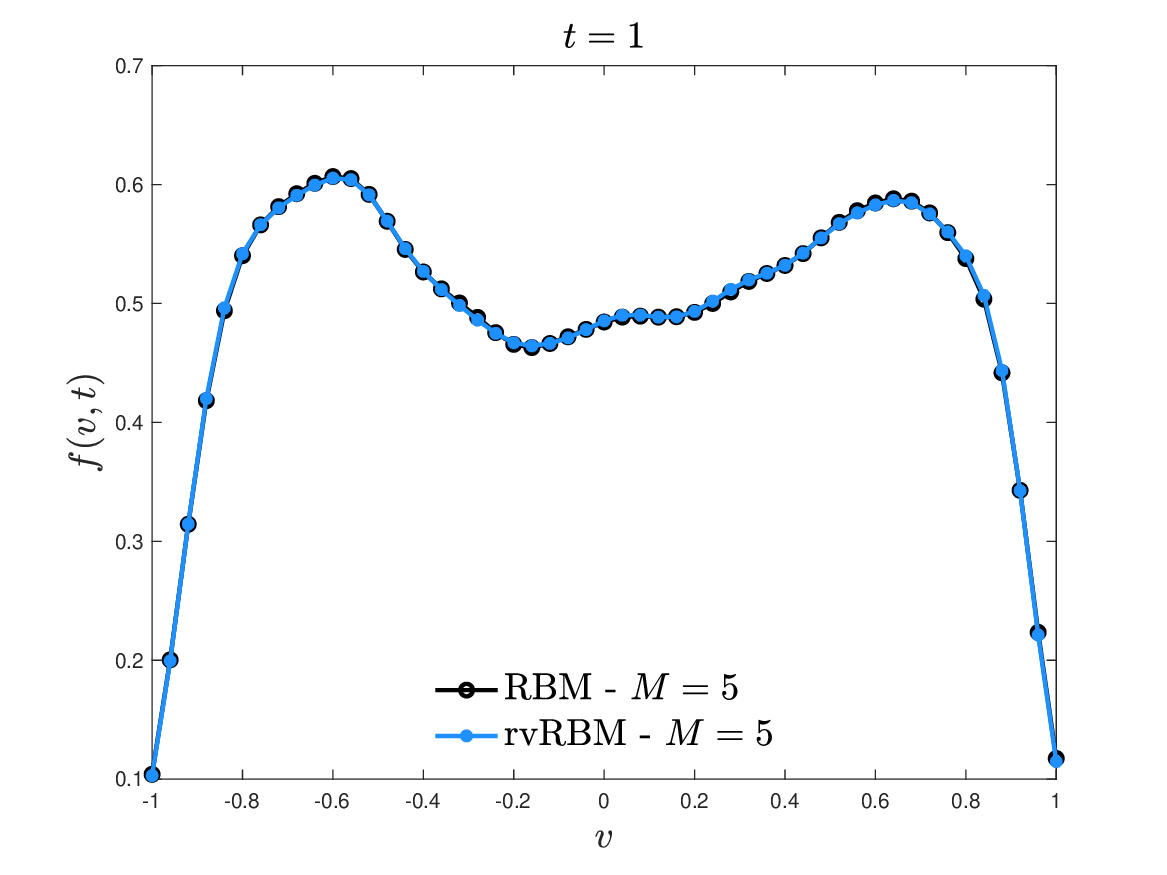}}
\subfigure[$t=1,M=10$]{
\includegraphics[scale = 0.17]{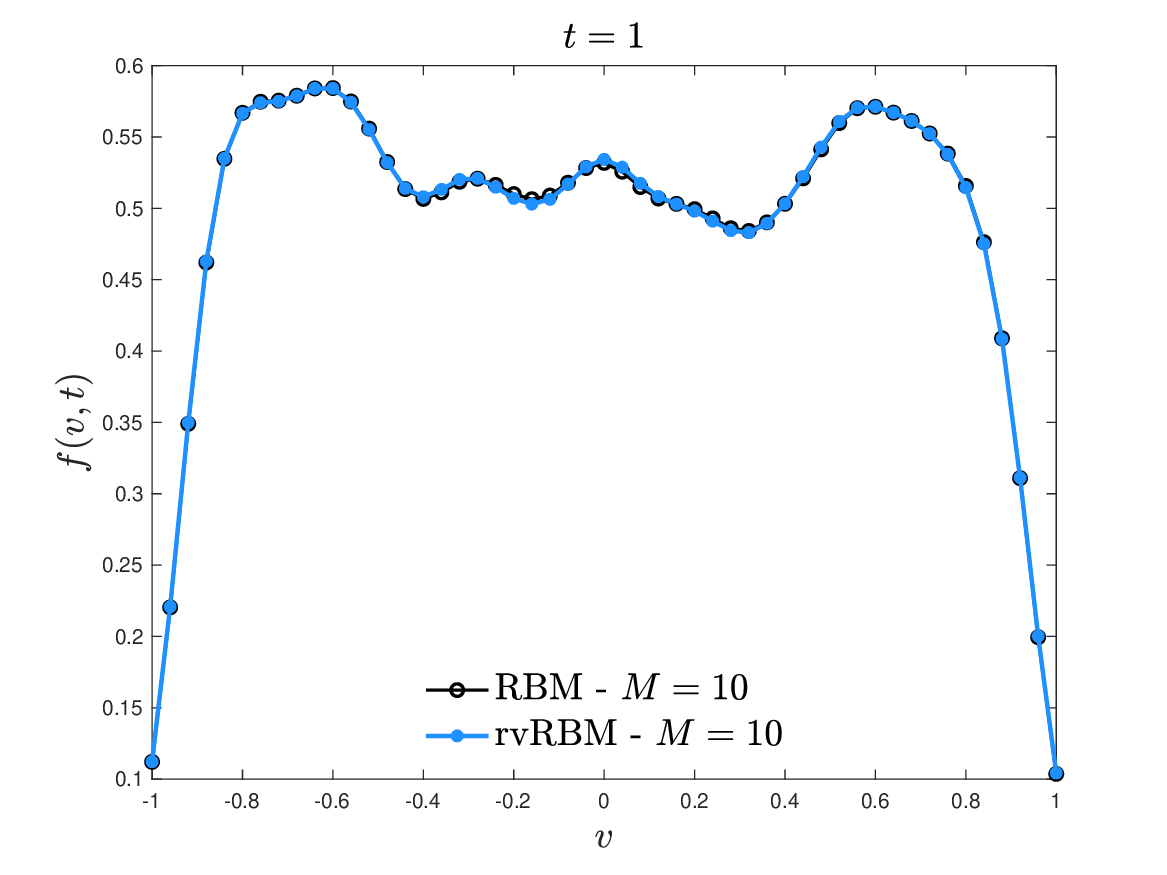} }
\subfigure[$t=5,M=5$]{
\includegraphics[scale = 0.17]{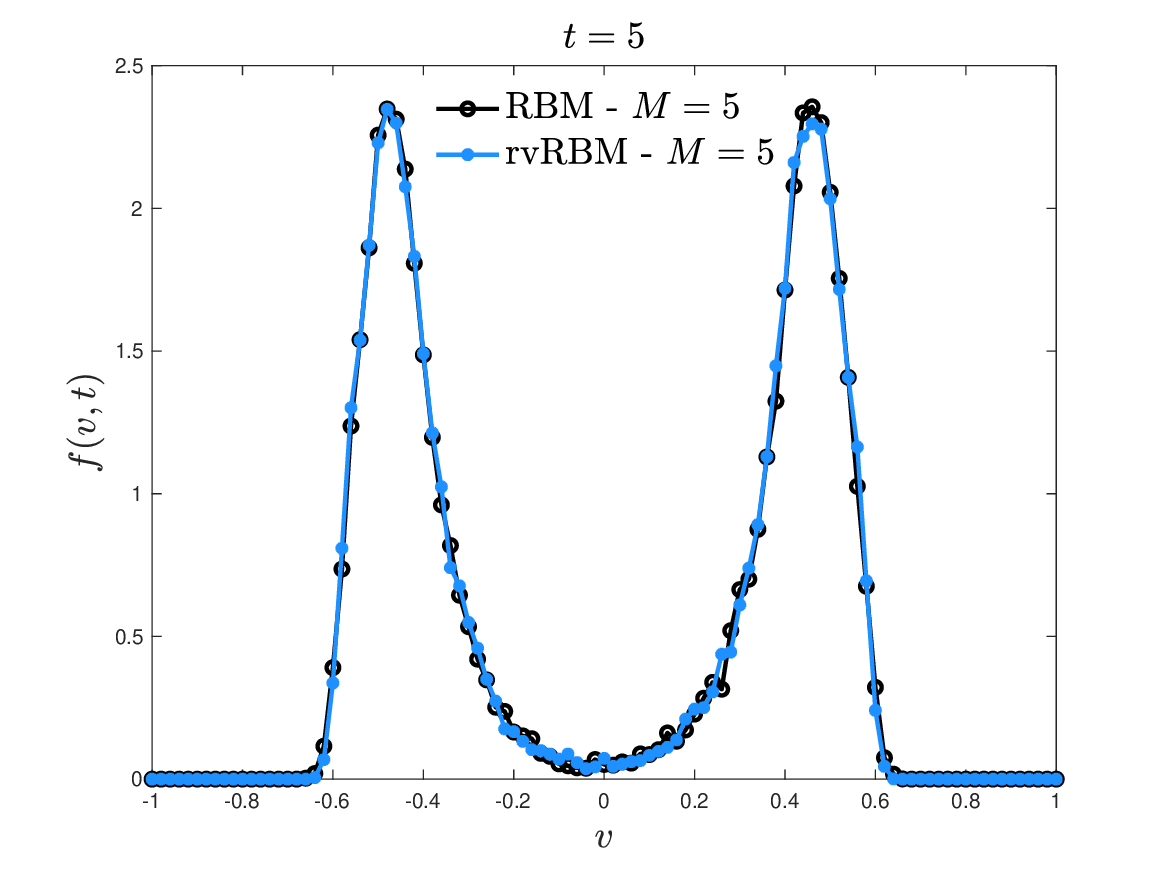}}
\subfigure[$t=5,M=10$]{
\includegraphics[scale = 0.17]{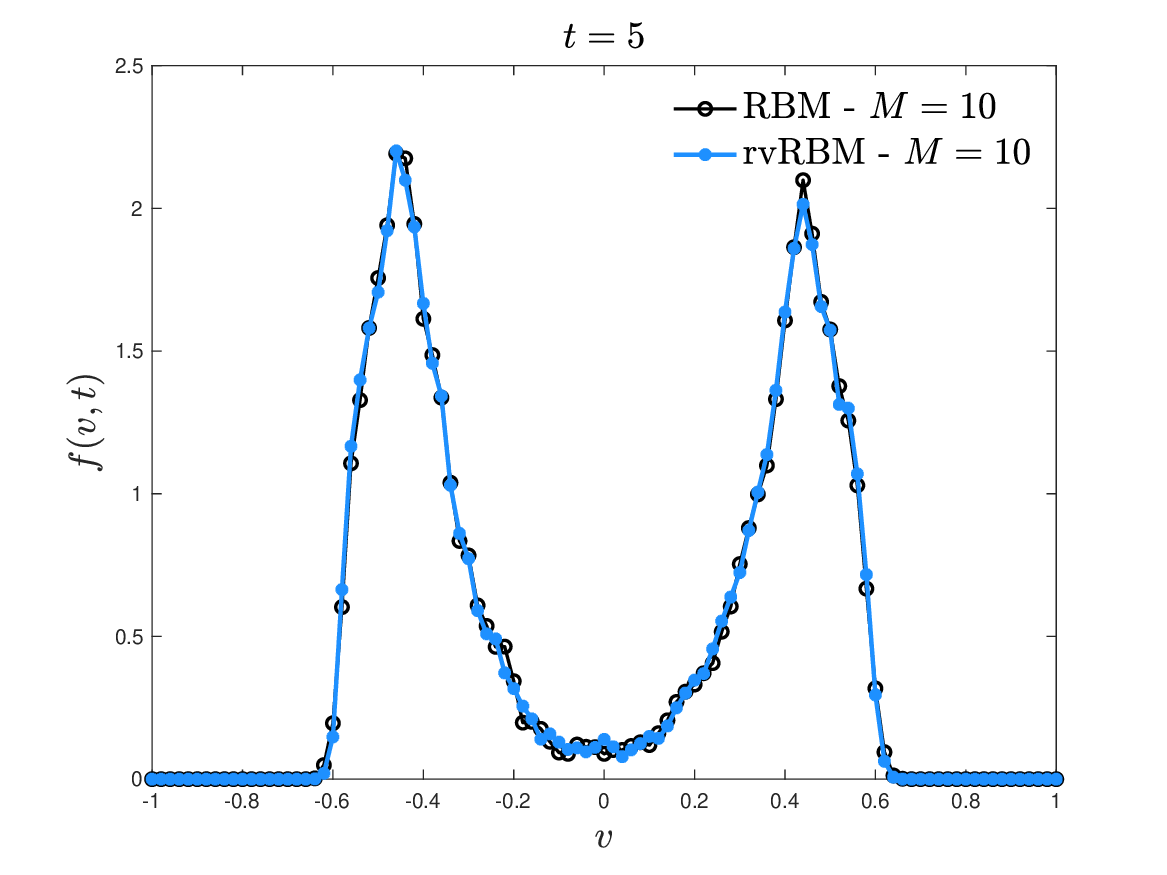}}
\caption{\textbf{Test 1a}. Evolution of the densities for the deterministic BC model in \eqref{eq:BCMF} by means of a particle approach defined in \eqref{eq:BCparticles} with interaction function with threshold $\delta = 1$ (top row), $\delta = 0.5$ (bottom row). We considered $N = 10^5$ particles with initial distribution defined in \eqref{eq:f0BC} and we compare the evolution of the approximated densities obtained with RBM and rvRBM with a subset of interacting particles of size $M=5$ or $M=10$. The surrogate interaction function is here considered $\tilde P(v_i) \equiv 1$.  }
\label{fig:BC}
\end{figure}

We consider the following absolute error for the mean
\begin{equation}
\label{eq:error}
\textrm{Error} = \left|\dfrac{1}{N} \sum_{i=1}^N v_{i} - m\right|,
\end{equation}
where $\left\{v_{i} \right\}_{i=1}^N$ is obtained either from RBM, as in \eqref{eq:RBM_BC}, or from rvRBM, as in \eqref{eq:rvRBM_BC}. 
In Figure \ref{fig:error_BCdeterministic} we show the evolution in time of the introduced absolute error in the case $\delta = 1$ (top row) and $\delta = 0.5$ (bottom row) by both the standard RBM and the introduced rvRBM method. We will refer to case 1 if $\tilde P(v_i)\equiv 1$ whereas with case 2 we will refer to $\tilde P(v_i) = 1-v_i^2$. We may observe how in the case $\delta = 1$ the method rvRBM leads to a significant advantage since the emerging distribution function is highly correlated with the one characterising the reduced variance method. Indeed, as shown in Figure \ref{fig:lambda_BCdet} (left plot) the optimal $\lambda^*$ is such that 
\[
\lambda^* \to 1, \qquad t\to +\infty. 
\]
On the other hand, for the case $\delta = 0.5$ the error produced by the rvRBM strategy is comparable with the one of RBM. Indeed, the computed $\lambda^*\to 0$, since the emerging distribution has no correlation with the one of the considered for the reduced variance method. 
\begin{figure}
\centering
\includegraphics[scale = 0.35]{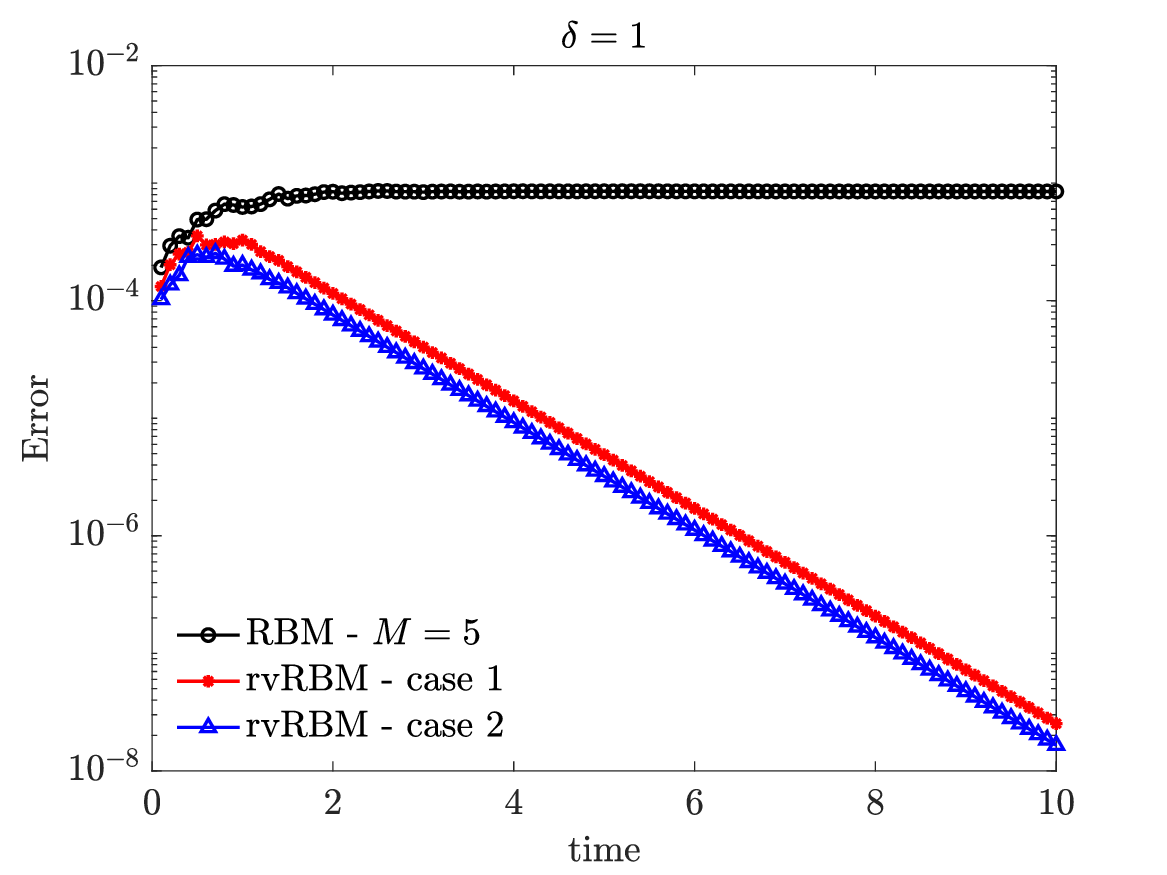}
\includegraphics[scale = 0.35]{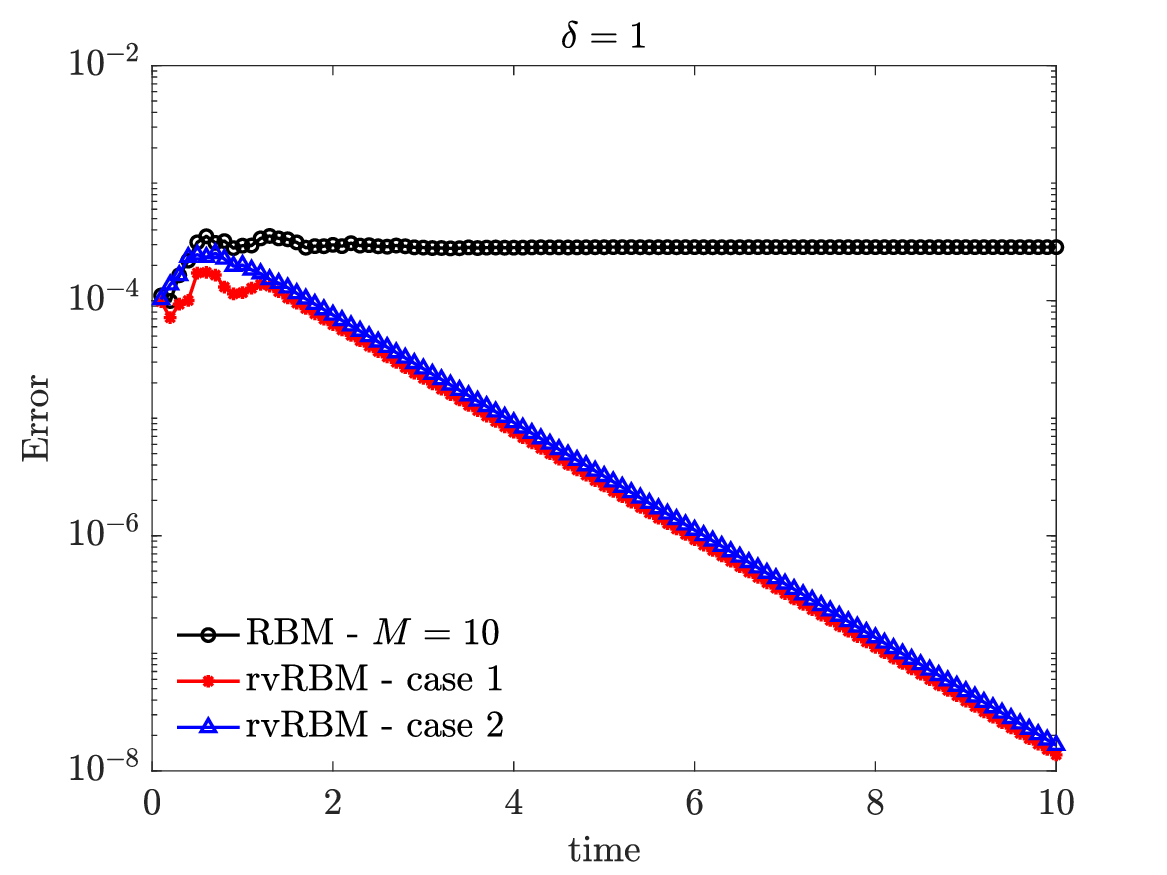} \\
\includegraphics[scale = 0.35]{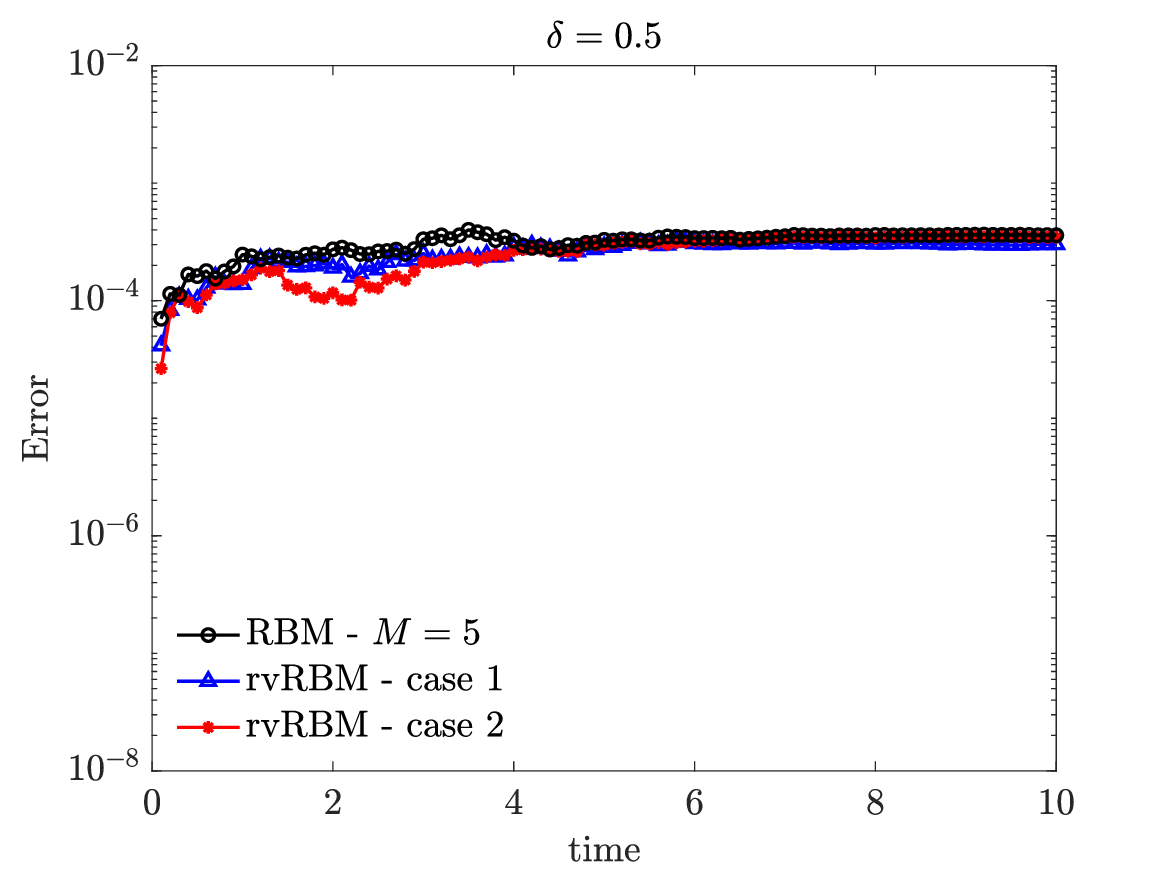}
\includegraphics[scale = 0.35]{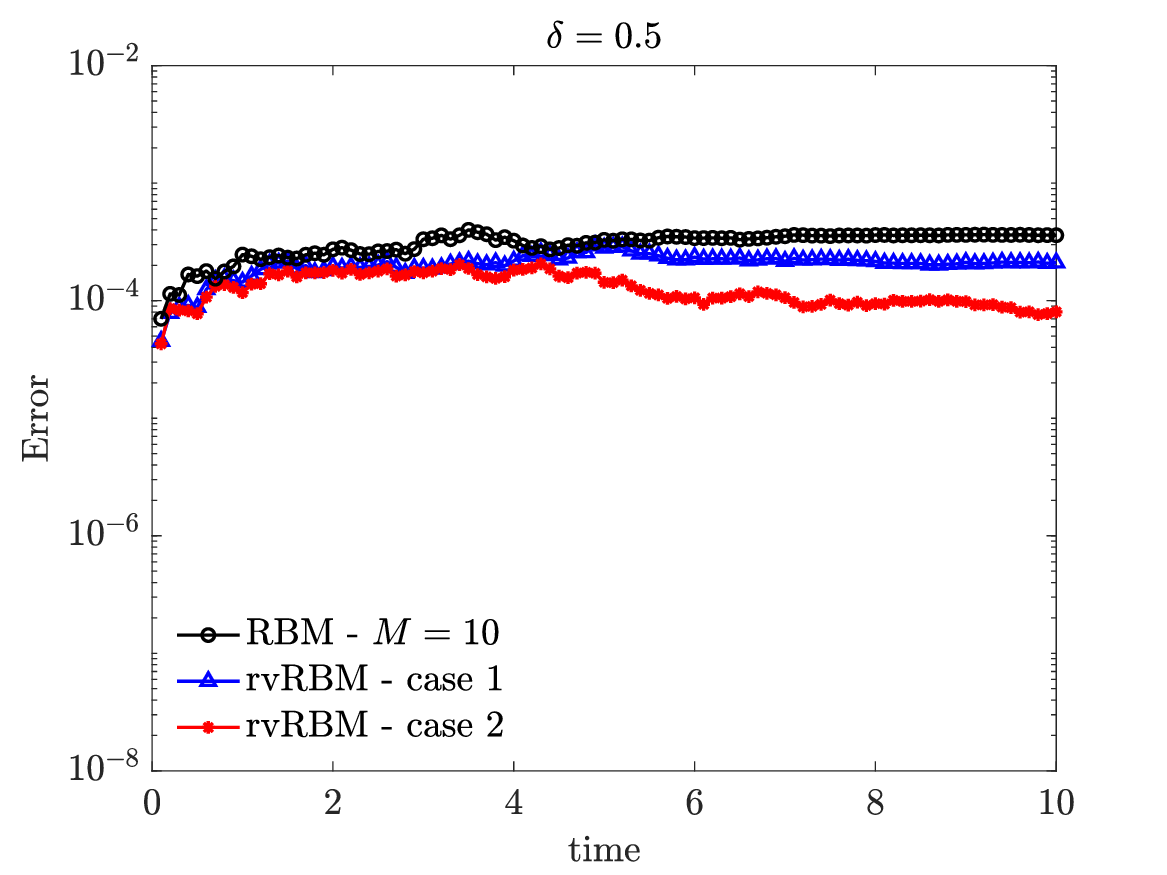}
\caption{\textbf{Test 1a.} Evolution of the absolute error defined in \eqref{eq:error} for the bounded confidence model solved through RBM and rvRBM with $M = 5$ (left column) or $M=10$ (right column). We considered $\delta=1$ (top row) and $\delta = 0.5$ (bottom row). }
\label{fig:error_BCdeterministic}
\end{figure}

\begin{figure}
\centering
\includegraphics[scale = 0.35]{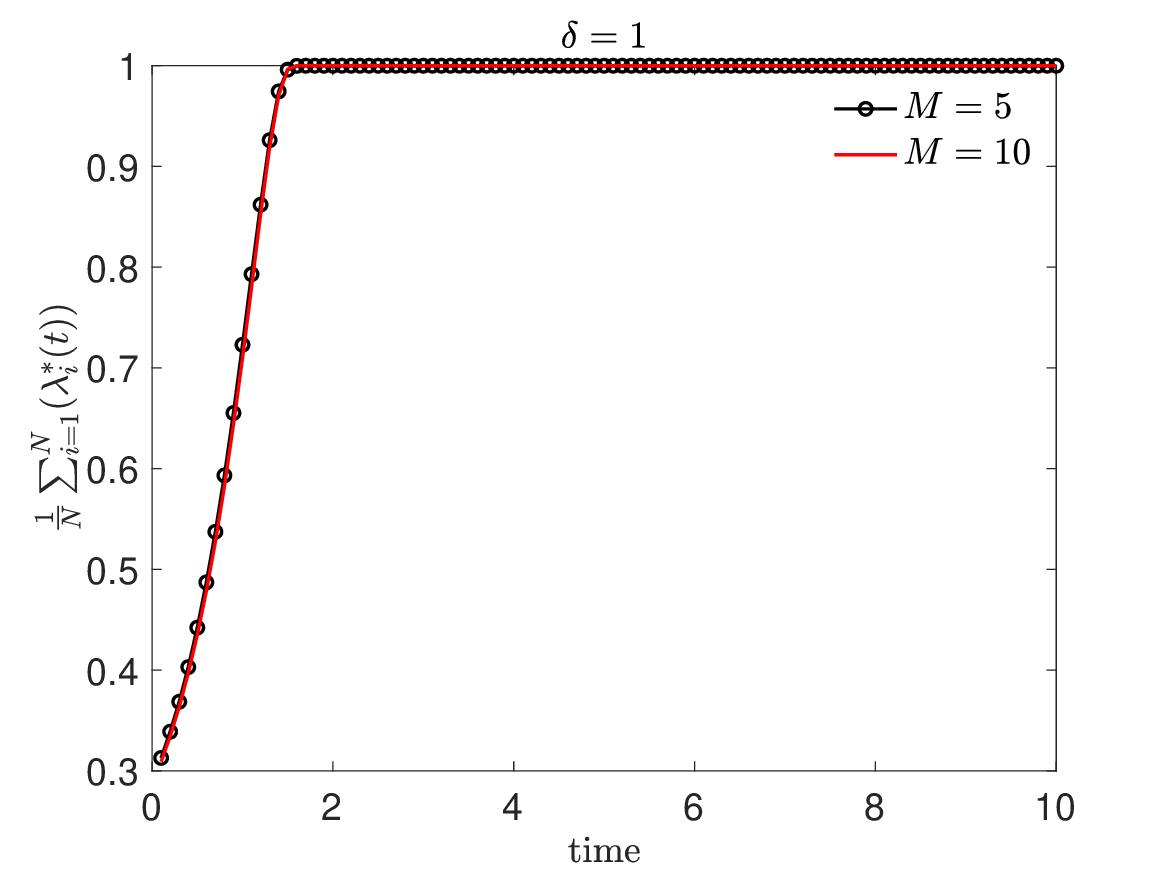}
\includegraphics[scale = 0.35]{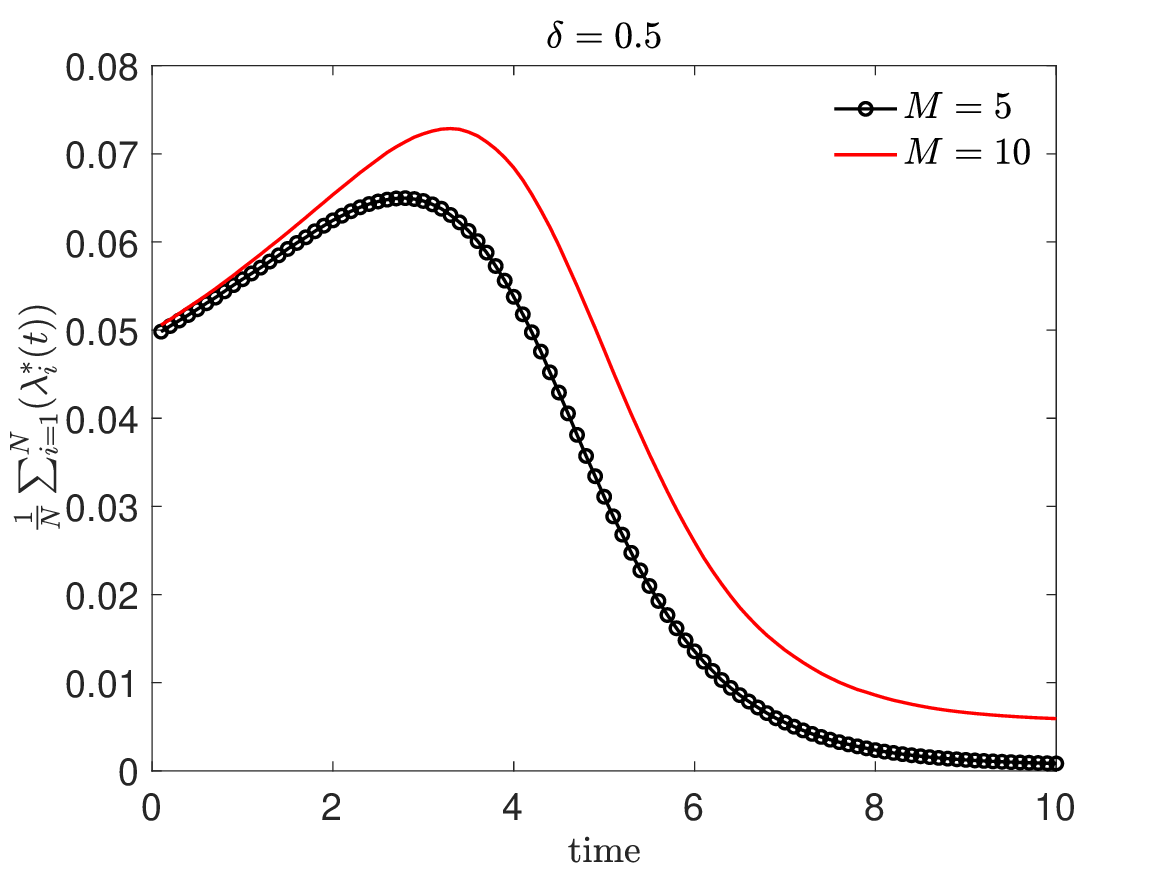}
\caption{\textbf{Test 1a.} Evolution of $\lambda^*$ for the model \eqref{eq:BCMF} approximated through a rvRBM strategy with $M=5,10$ and $\delta = 1$ (left), $\delta = 0.5$ (right). }
\label{fig:lambda_BCdet}
\end{figure}

In Figure \ref{fig:error_costN} we depict the evolution of the error produced by fixed batch $M= 5$ (left) or $M = 10$ (right) and a variable $N \in \{100,\dots,10^4\}$. The error \eqref{eq:error} is here computed at time $T = 5$. Since the cost of the reduced variance approach is compatible to the one of a RBM method $O(MN)$, we may observe how, in the reduced variance approach, we essentially gain accuracy at the same cost of RBM. 

\begin{figure}
\centering
\includegraphics[scale = 0.35]{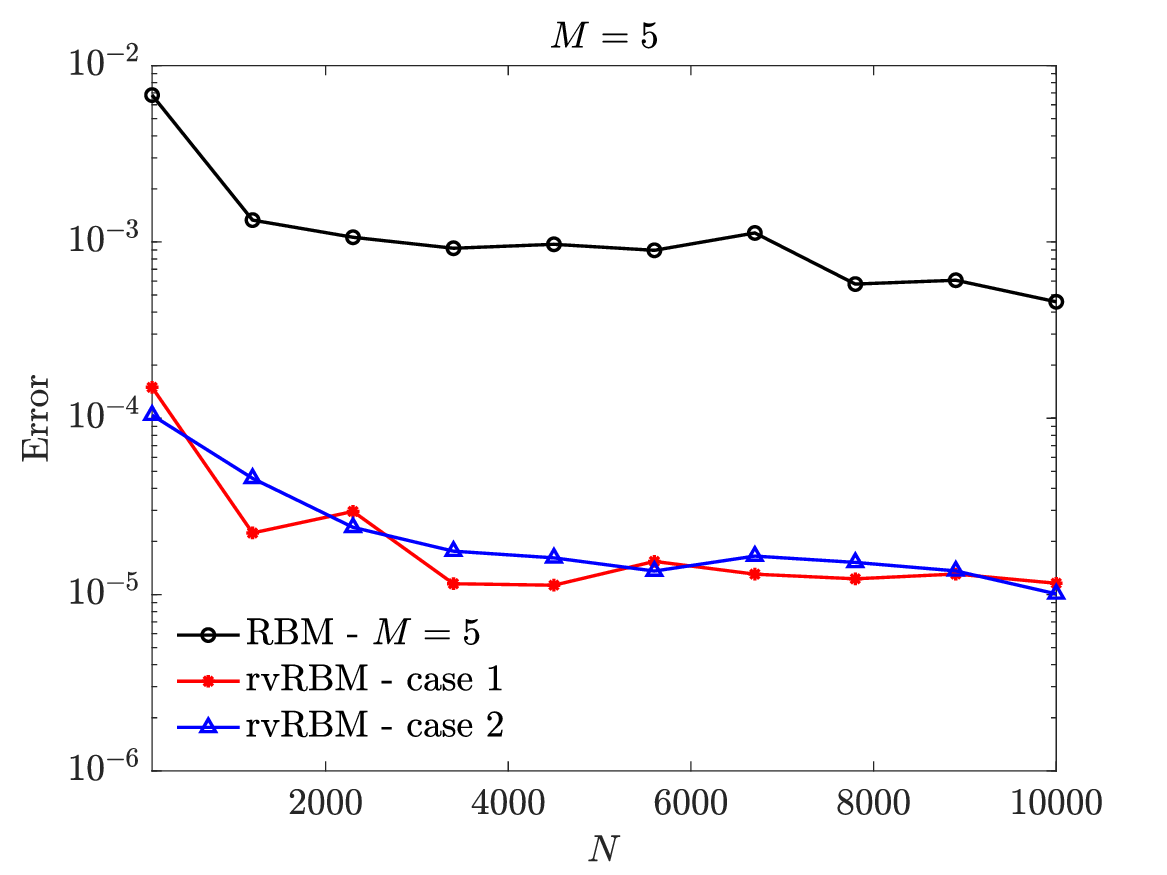}
\includegraphics[scale = 0.35]{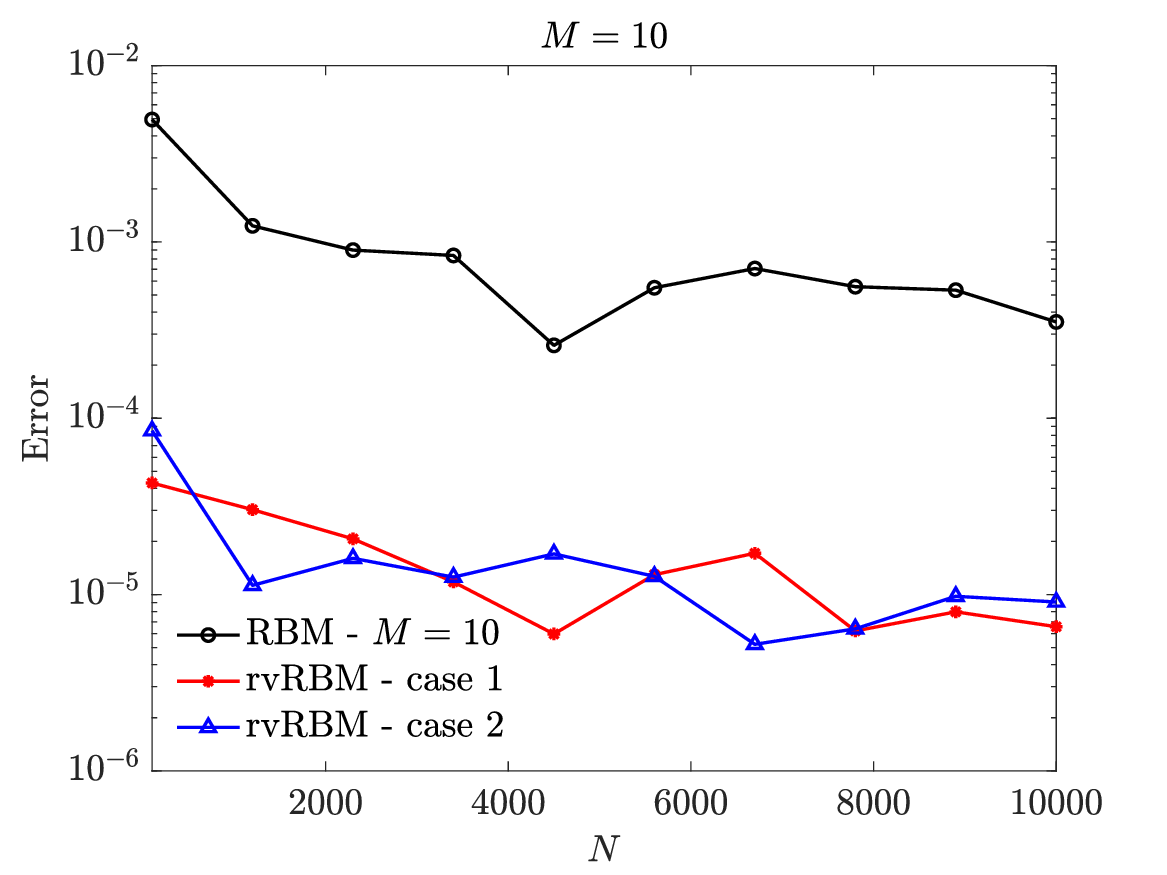}
\caption{\textbf{Test 1a.}  Comparison of the errors produced by RBM and rvRBM with surrogate models obtained with $\tilde P\equiv 1$ (case 1) or $\tilde P(v_i) = 1-v_i^2$ (case 2). We considered  a fixed batch size $M = 5$ (left) and $M = 10$ (right) and variable size of the sample $N \in \{10,\dots,10^4\}$. The error is computed from the bounded confidence model \eqref{eq:BCparticles} with $\delta = 1$ at time $T = 5$.   }
\label{fig:error_costN}
\end{figure}

\subsubsection{Test 1b: bounded confidence model with isolated clusters}
The loss of accuracy of the rvRBM method in the case $\delta = 0.5$ is essentially due to the uncorrelated behaviour of the surrogate model with respect to the full one. Indeed, whereas the considered surrogate models would lead to the formation of global consensus, the bounded confidence model leads to the formation of local stable clusters \cite{BL}. A possible way to overcome this issue has been addressed in Remark \ref{rem:lambdak}. We consider a sample of the initial distribution
\begin{equation}
\label{eq:f0_2c}
f(v,0) = 
\begin{cases}
1 & v \in [\frac{1}{4},\frac{3}{4}] \\
1 & v \in [-\frac{3}{4},-\frac{1}{4}]. 
\end{cases}
\end{equation}
Now, for $\delta = 0.5$ no interactions are expected from the two initial clusters centered in $\pm\frac{1}{2}$. We computed the optimal $\lambda^*_k$, $k=1,2$, exploiting two surrogate models, the first is $\tilde P(v_i)\equiv 1$ (case 1) whereas for the second we considered $\tilde P(v_i) = (v_i-\frac{1}{2})(v_i+\frac{1}{2})$ (case 2).
In Figure \ref{fig:2clusters} we report the evolution of the rvRBM method where we defined a sequence of optimal $\lambda^*_k$ as in \eqref{eq:lambdak} with $k=1,2$ such that $U_k = \pm \frac{1}{2}$. We may observe that the evolution in time of the obtained rvRBM method is consistent with the one defined by the standard RBM method. Furthermore, from the evolution of the absolute error \eqref{eq:error} we can observe that the method is capable to achieve much higher accuracy than RBM.  
\begin{figure}
\centering
\includegraphics[scale = 0.35]{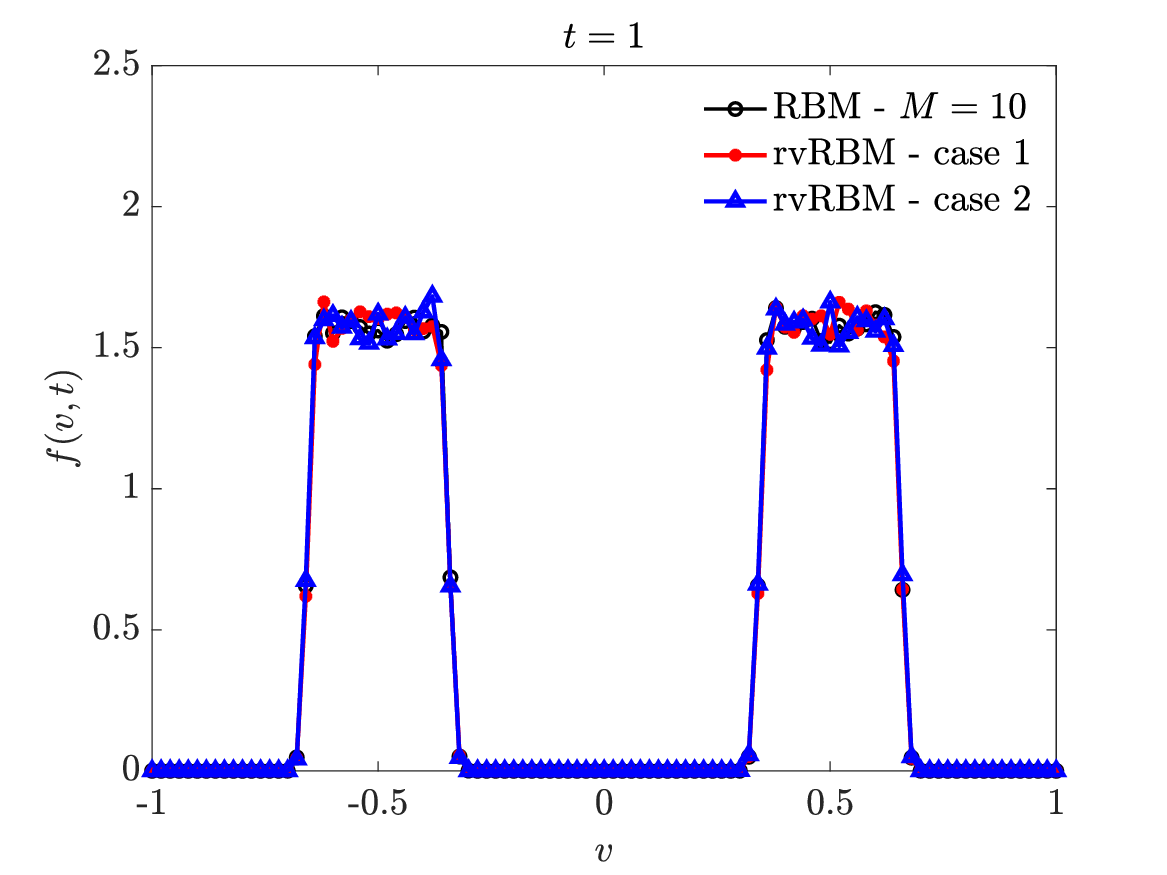}
\includegraphics[scale = 0.35]{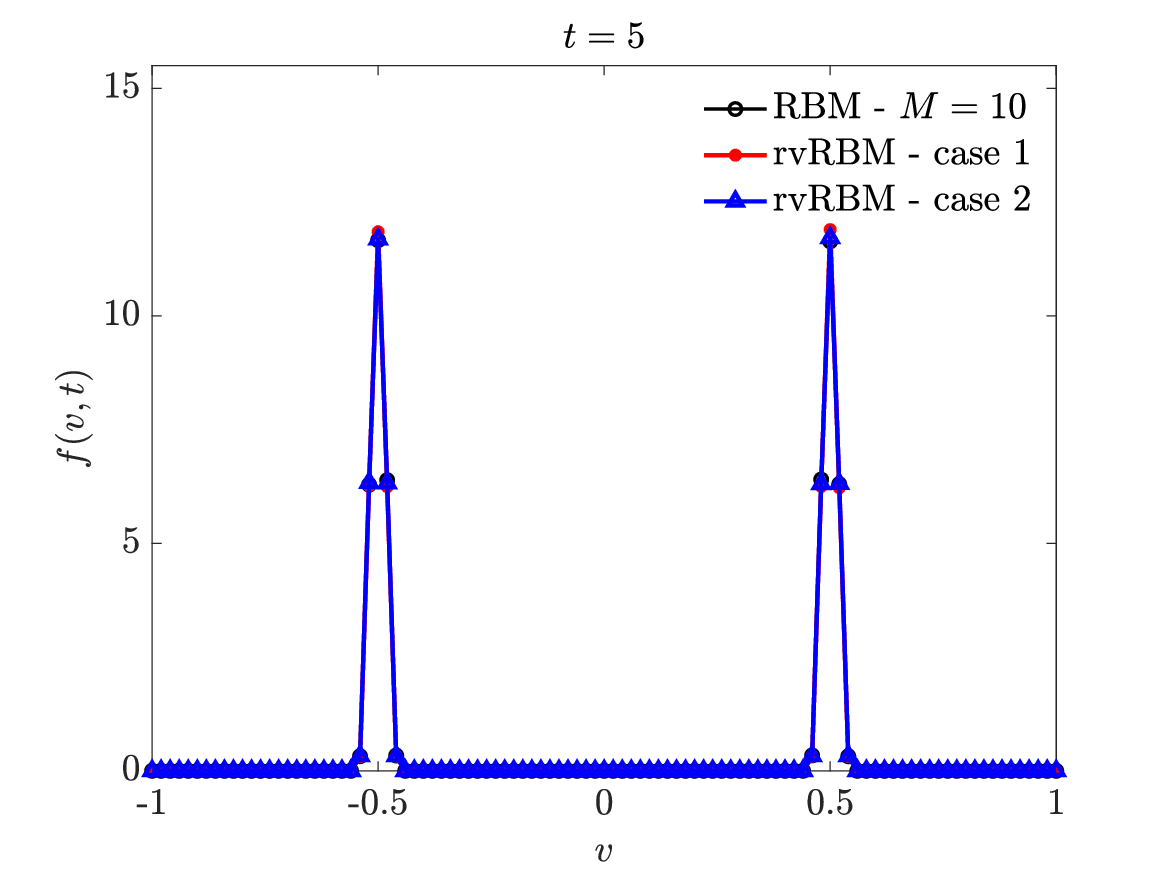}\\
\includegraphics[scale = 0.35]{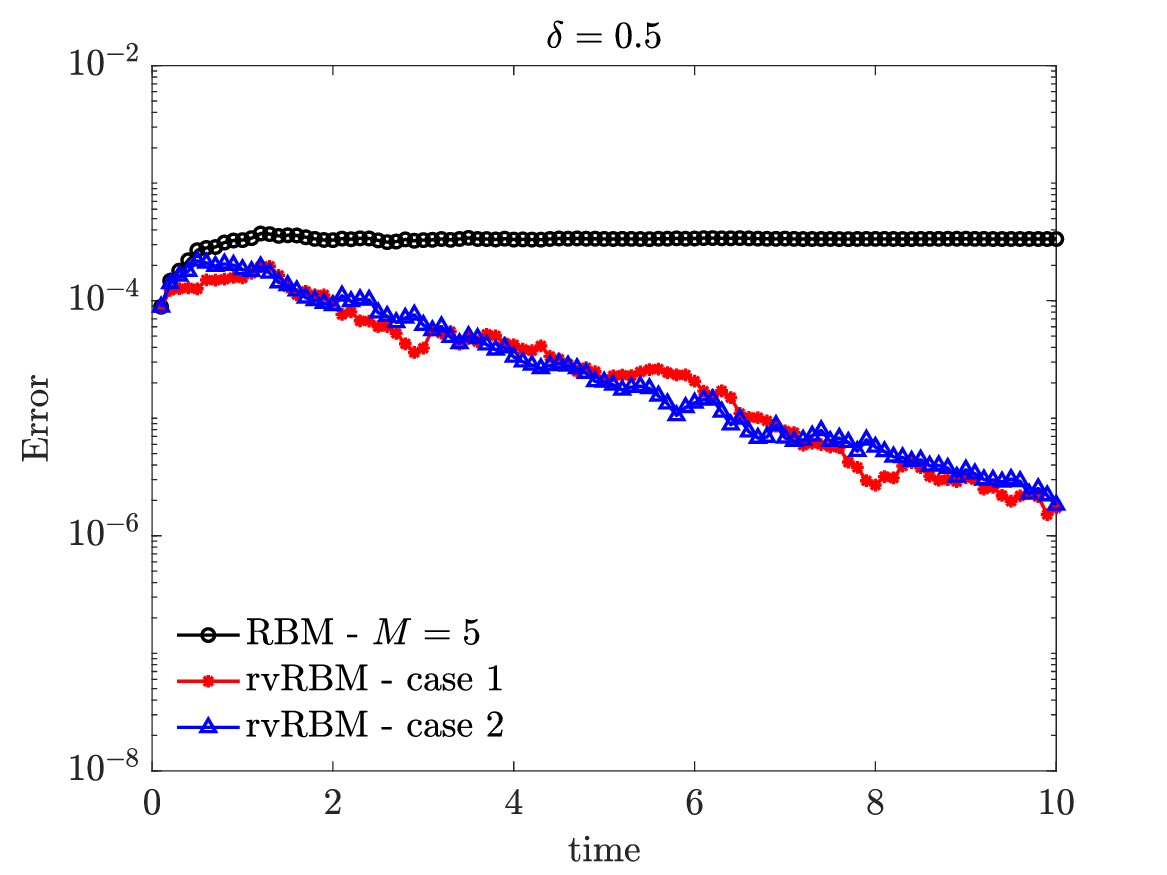}
\includegraphics[scale = 0.35]{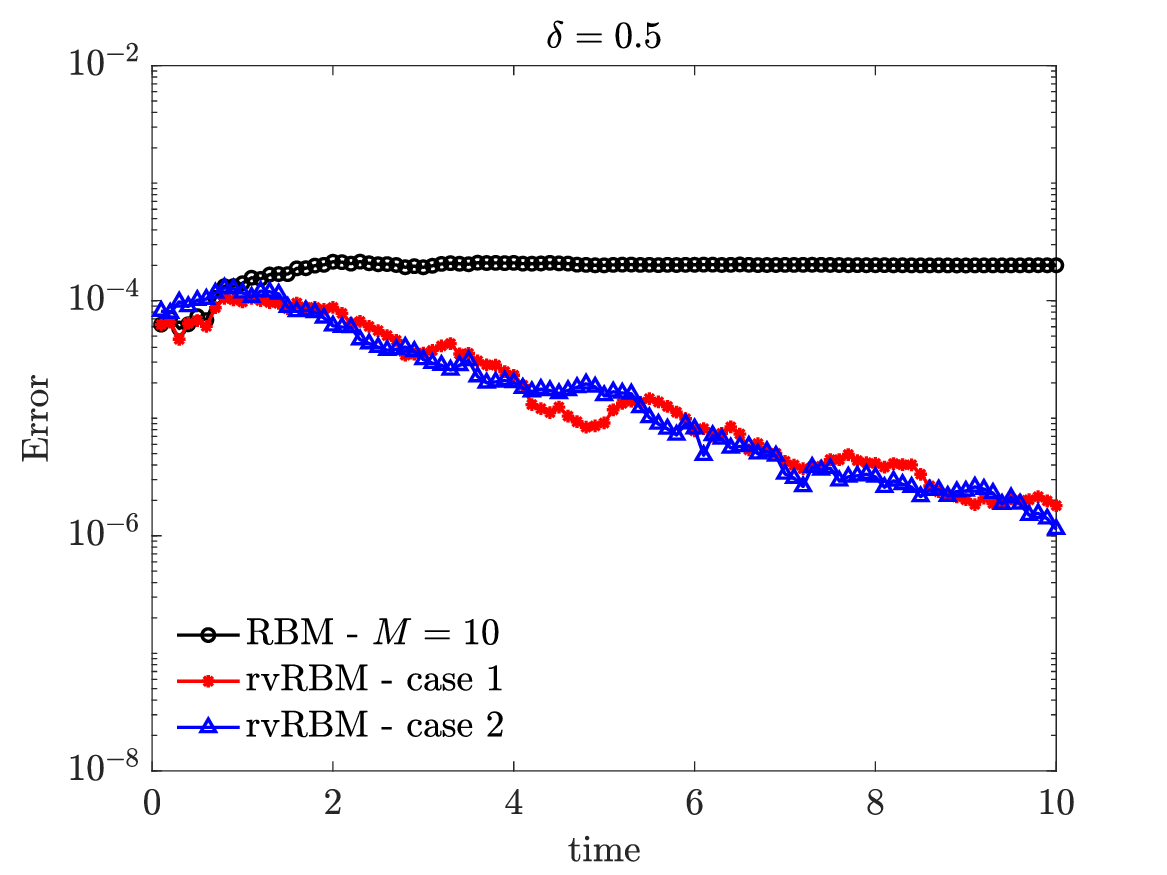}
\caption{\textbf{Test 1b.} Top row: comparison of the reconstructed distributions at time $t = 1$ and $t = 5$ for the bounded confidence model with $\delta = 0.5$ using RBM or rvRBM with $\tilde P(v_i)\equiv 1$ (case 1) or $\tilde P(v_i) = (v_i-\frac{1}{2})(v_i+\frac{1}{2})$. The batch size is $M = 10$ and the initial sample has been obtained from \eqref{eq:f0_2c}. Bottom row: evolution of the absolute errors \eqref{eq:error} for the bounded confidence model solved through RBM and rvRBM with $M = 5$ (left) or $M = 10$ (right). }
\label{fig:2clusters}
\end{figure} 

\subsection{Test 2: stochastic opinion formation model}
We now consider a stochastic bounded confidence model defined in Section \ref{sect:2.3} in \eqref{eq:BCstoch}, where the interaction function is defined as in \eqref{eq:BCinter} and that is obtained from the meanfield limit of the following system of stochastic differential equations
\begin{equation}
\label{eq:BCstochmicro}
dv_i(t) = \dfrac{1}{N} \sum_{j=1}^N P(|v_i-v_j|)(v_j-v_i)dt + \sqrt{2\sigma^2 D^2(v_i)} dW_i^t,
\end{equation}
with $D^2(v_i) = 1-v_i(t)$ and $\{W_i\}_{i=1}^N$ a set of independent Wiener processes. In the following, we fixed  the parameter $\sigma^2 = 10^{-1}$. Therefore, the RBM approximation is obtained by considering 
\[
dv_i(t) = \dfrac{1}{M} \sum_{j\in \mathcal S_M} P(|v_i-v_j|)(v_j-v_i)dt + \sqrt{2\sigma^2 D^2(v_i)} dW_i^t,
\]
where $\mathcal S_M$ is a uniform subsample of size $M>1$ from the $\{v_i\}_{i=1}^N$. Similarly, rvRBM strategy is obtained as 
\[
dv_i(t) = \left[\dfrac{1}{M} \sum_{j\in \mathcal S_M} P(|v_i-v_j|)(v_j-v_i) - \lambda^* \tilde P(v_i)(U_N-U_M)\right]dt + \sqrt{2\sigma^2 D^2(v_i)} dW_i^t.
\]
In the following, the methods defining RBM and rvRBM share the same set of Wiener processes of the complete model \eqref{eq:BCstochmicro}. The set of SDEs is solved through the Euler-Maruyama scheme. 

In Figure \ref{fig:BC_stoch} we report the evolution of the particle density at time $t = 1$ and $t = 5$ in the stochastic setting and obtained with $M = 5,10$. As initial distribution we considered the one introduced in \eqref{eq:f0BC}. In each plot, we compare the evolution of the approximated density obtained with $N = 10^5$ particles and either RBM or rvRBM algorithms. In Figure \ref{fig:BC_stoch} we considered the case $\delta = 1$. 
\begin{figure}
\centering
\subfigure[$t=1,M=5$]{
\includegraphics[scale = 0.17]{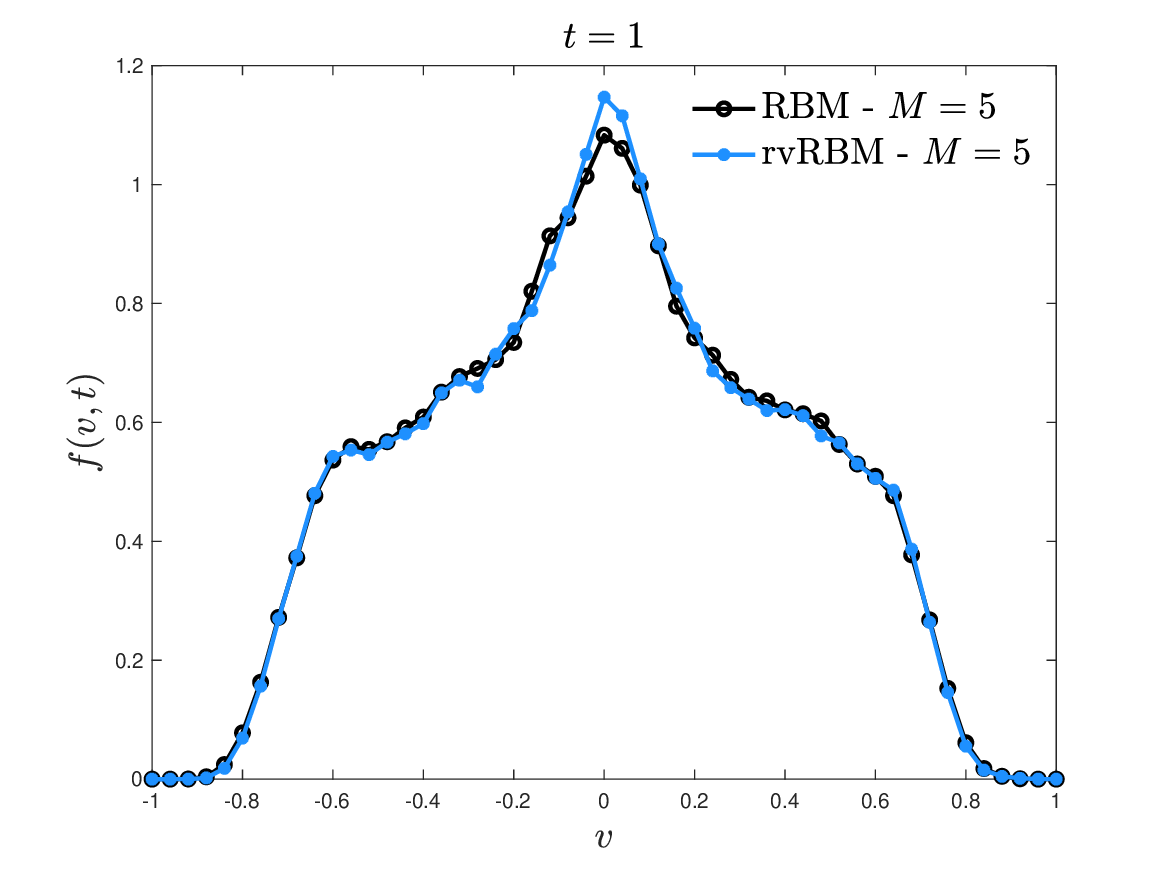}}
\subfigure[$t=1,M=10$]{
\includegraphics[scale = 0.17]{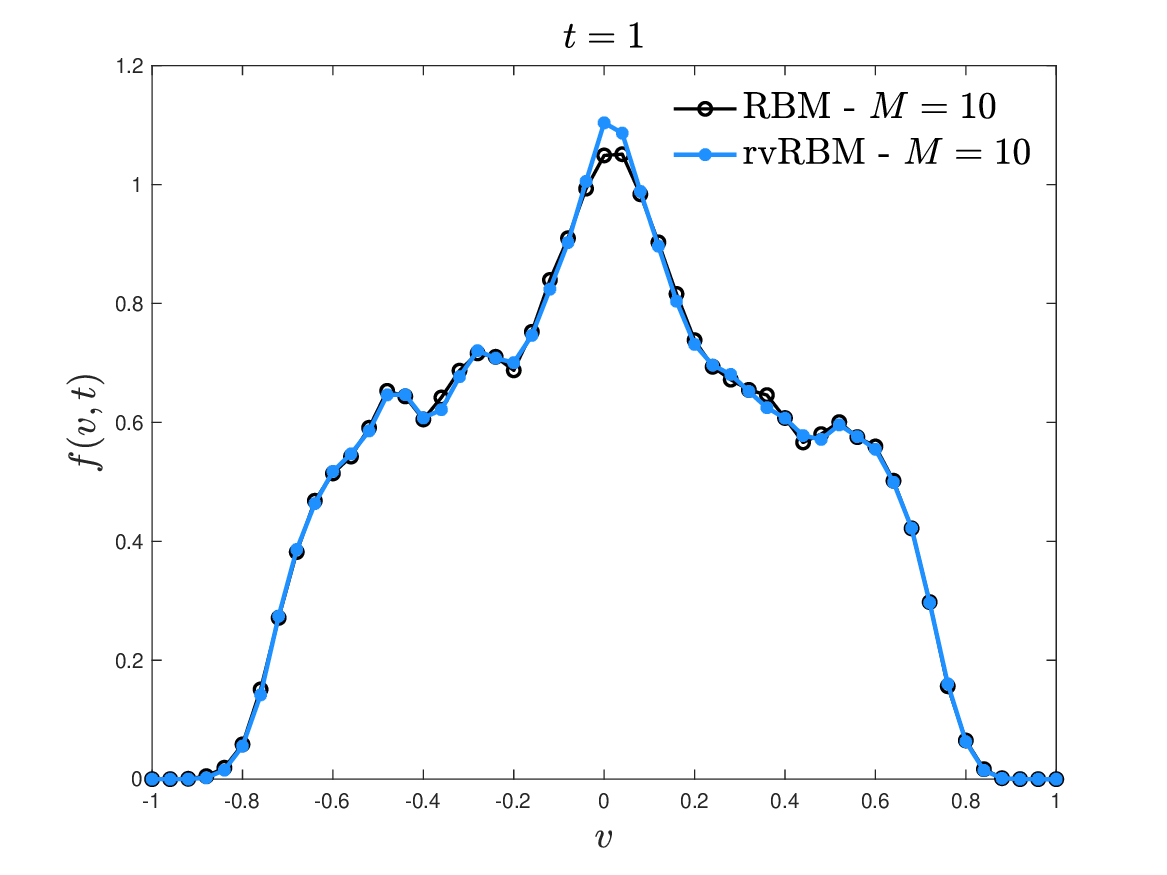} }
\subfigure[$t=5,M=5$]{
\includegraphics[scale = 0.17]{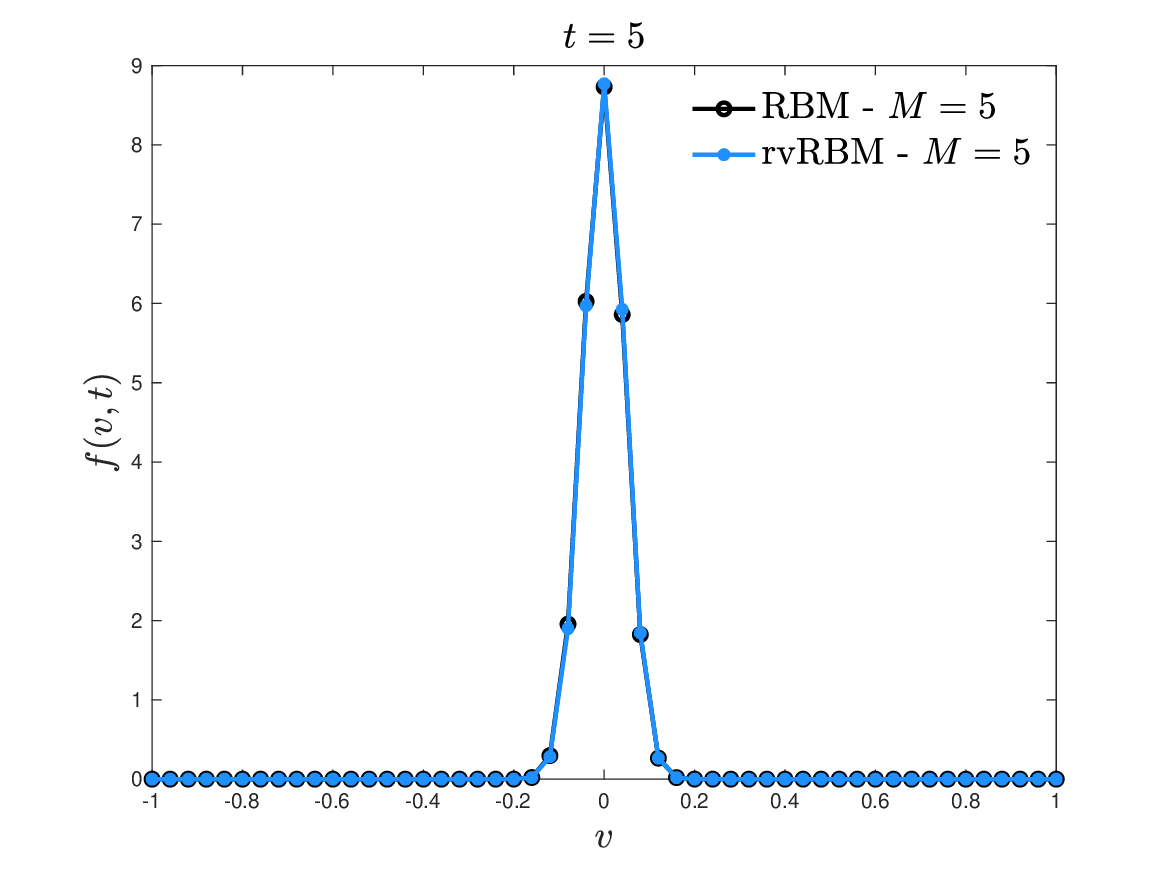}}
\subfigure[$t=5,M=10$]{
\includegraphics[scale = 0.17]{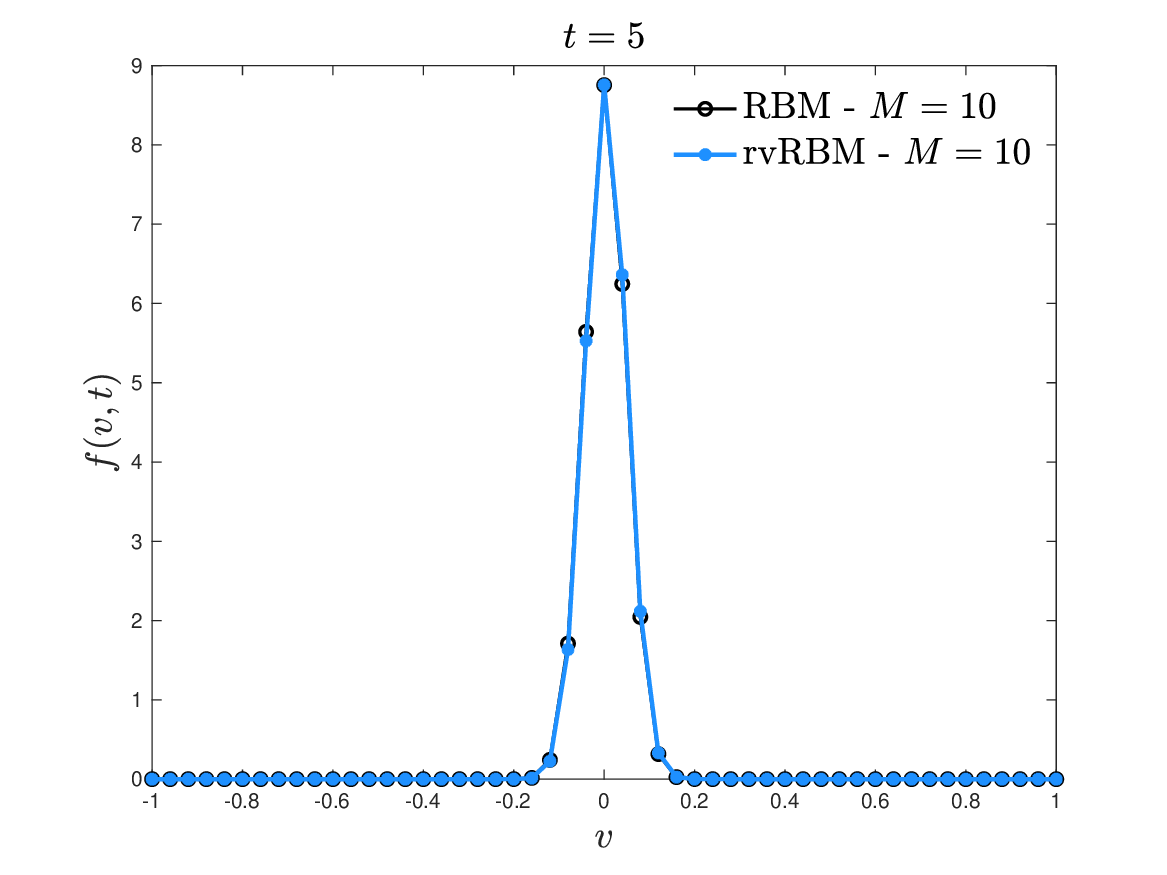}}\\
\subfigure[$t=1,M=5$]{
\includegraphics[scale = 0.17]{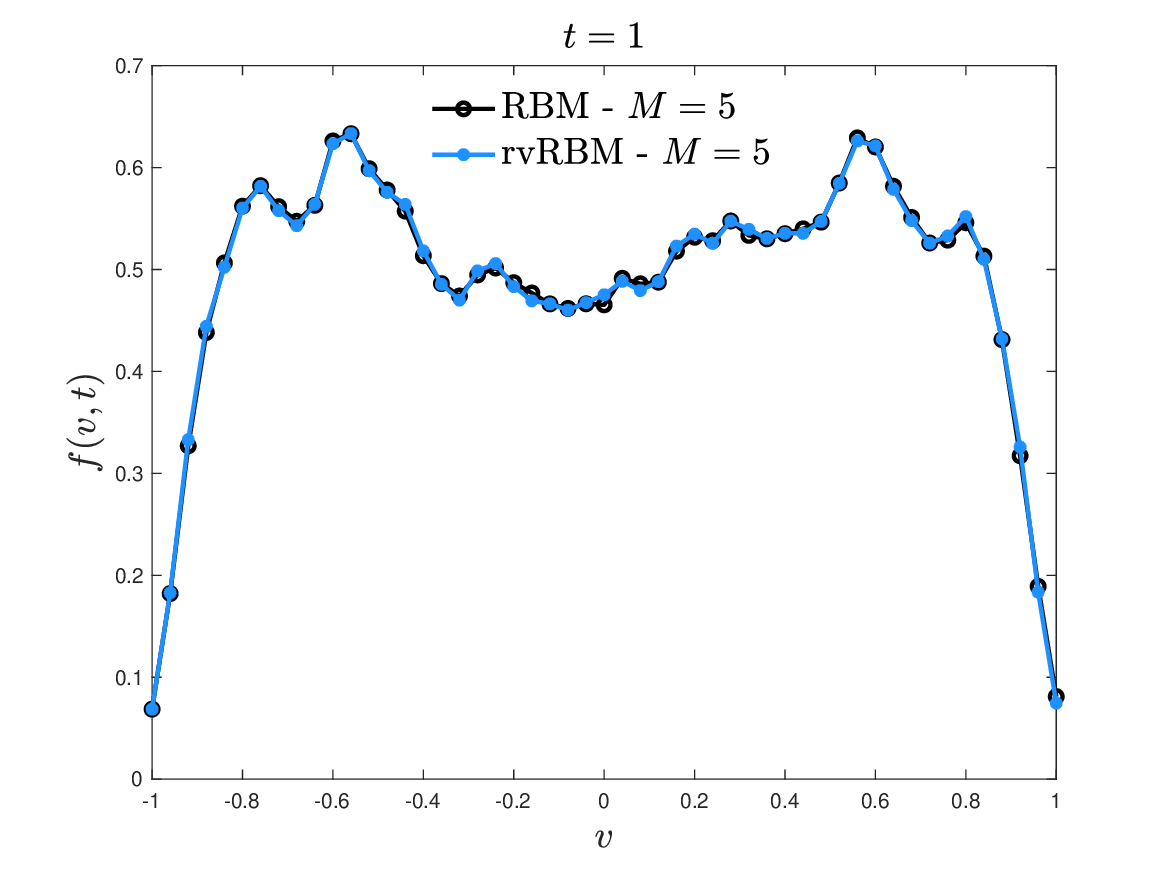}}
\subfigure[$t=1,M=10$]{
\includegraphics[scale = 0.17]{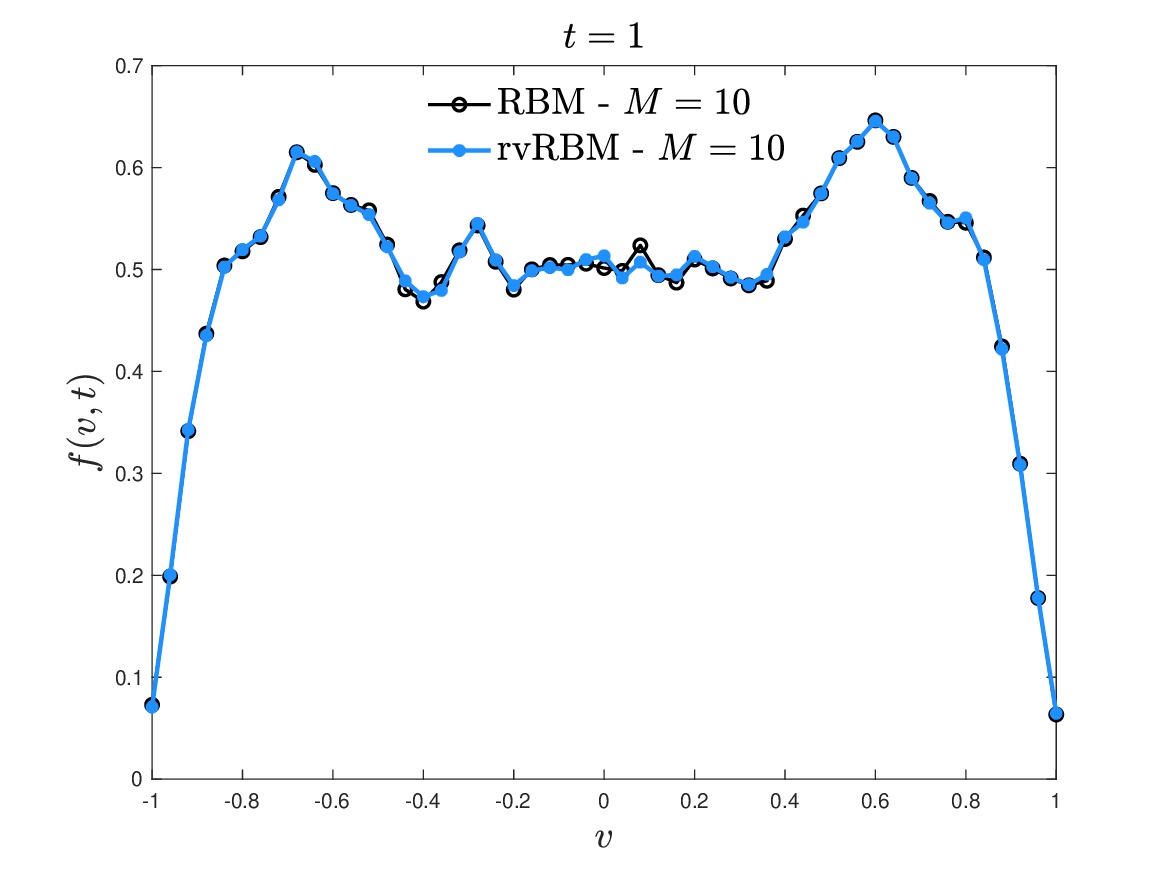} }
\subfigure[$t=5,M=5$]{
\includegraphics[scale = 0.17]{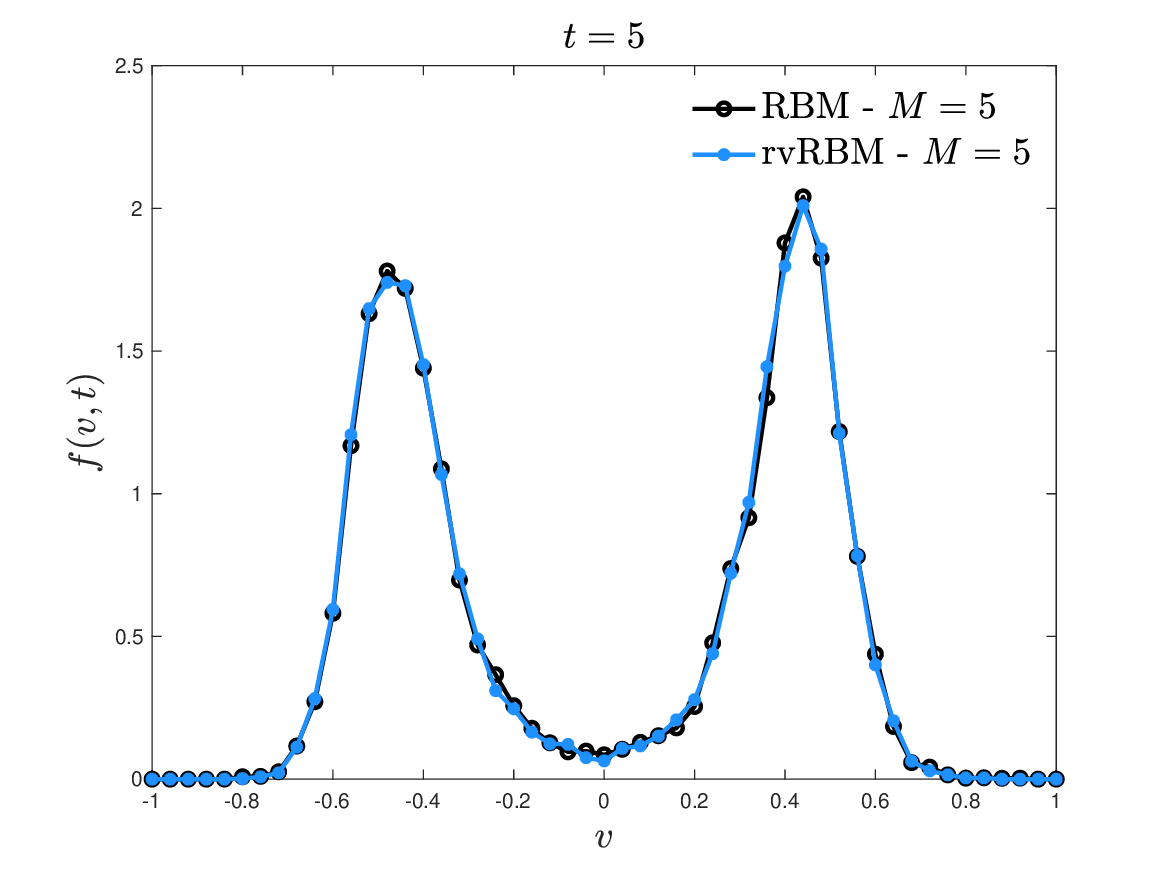}}
\subfigure[$t=5,M=10$]{
\includegraphics[scale = 0.17]{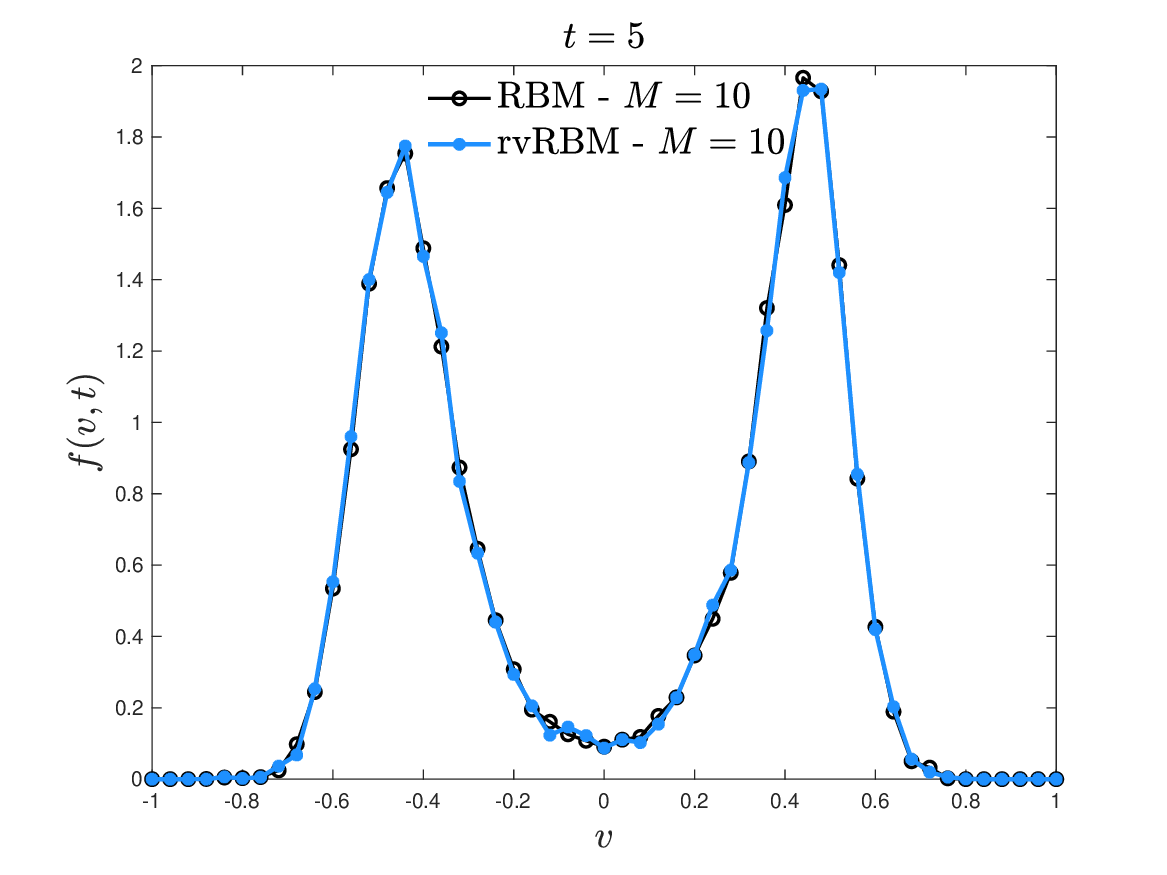}}
\caption{\textbf{Test 2.} Evolution of the densities for the stochastic bounded confidence model \eqref{eq:BCstoch} with interaction function \eqref{eq:BCinter} and $\delta = 1$ (top row), $\delta = 0.5$ (bottom row). We considered $N = 10^5$ particles with initial distribution \eqref{eq:f0BC} and we compare the evolution of the approximated densities obtained with RBM or rvRBM with a subset of interacting particles of size $M = 5$ or $M = 10$. We fixed the diffusion constant $\sigma^2 = 10^{-1}$.  }
\label{fig:BC_stoch}
\end{figure}

In Figure \ref{fig:error_BCstoch} we show the evolution in time of the introduced absolute error in the case $\delta = 1$ (top row) obtained through the standard RBM and the introduced rvRBM method. We may observe how rvRBM leads to a substantial advantage if the emerging distributions are highly correlated, as in the case $\delta = 1$. At variance with the deterministic model, the asymptotic level of accuracy of rvRBM is influenced by the total number of particles due to the presence of the stochastic component in the dynamics. In the second row we report the error produced by a fixed batch of size $M = 5$ (left) or $M = 10$ (right) and an increasing total number of particles $N \in \{10,\dots,10^4\}$. The error is computed at time $T = 5$. We may observe how the reduced variance approach is capable to achieve higher accuracy with the same number of particles, and therefore of cost, of the RBM method. 

\begin{figure}
\centering
\includegraphics[scale = 0.35]{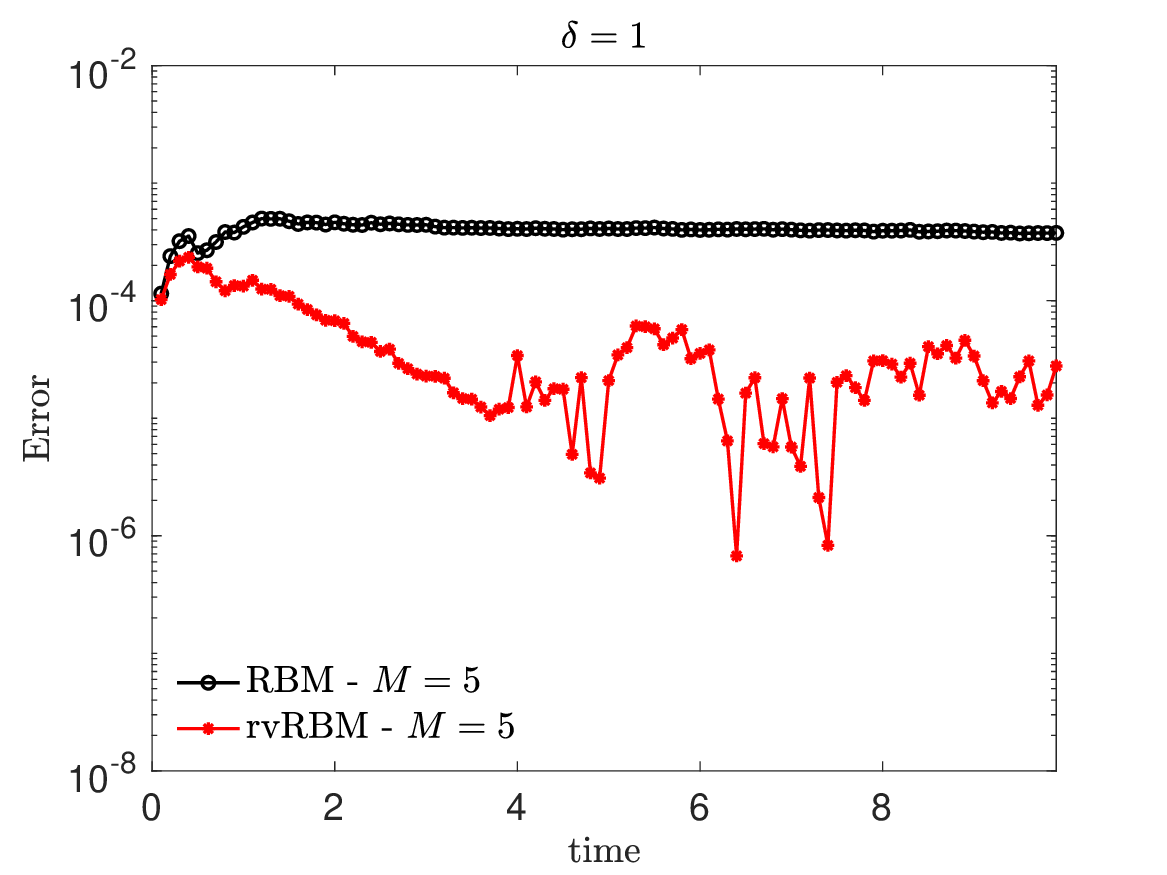}
\includegraphics[scale = 0.35]{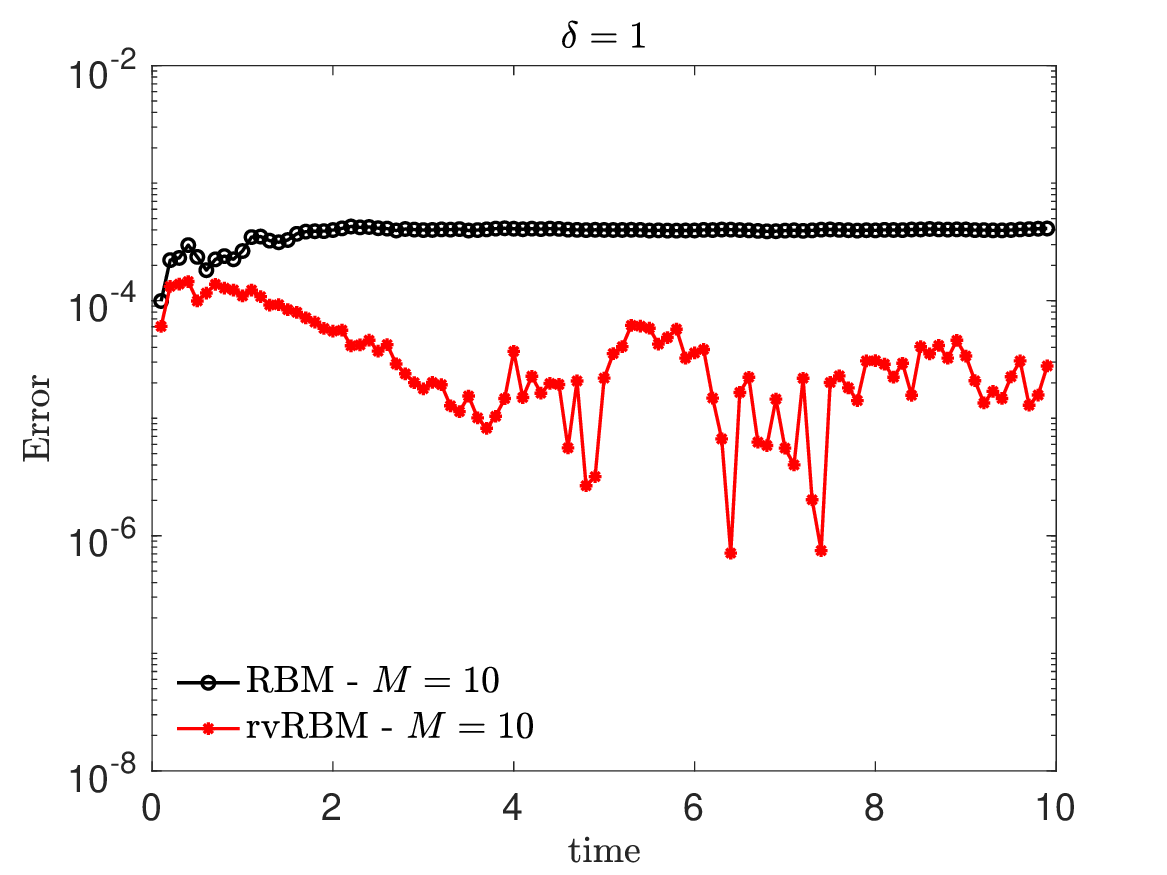} \\
\includegraphics[scale = 0.35]{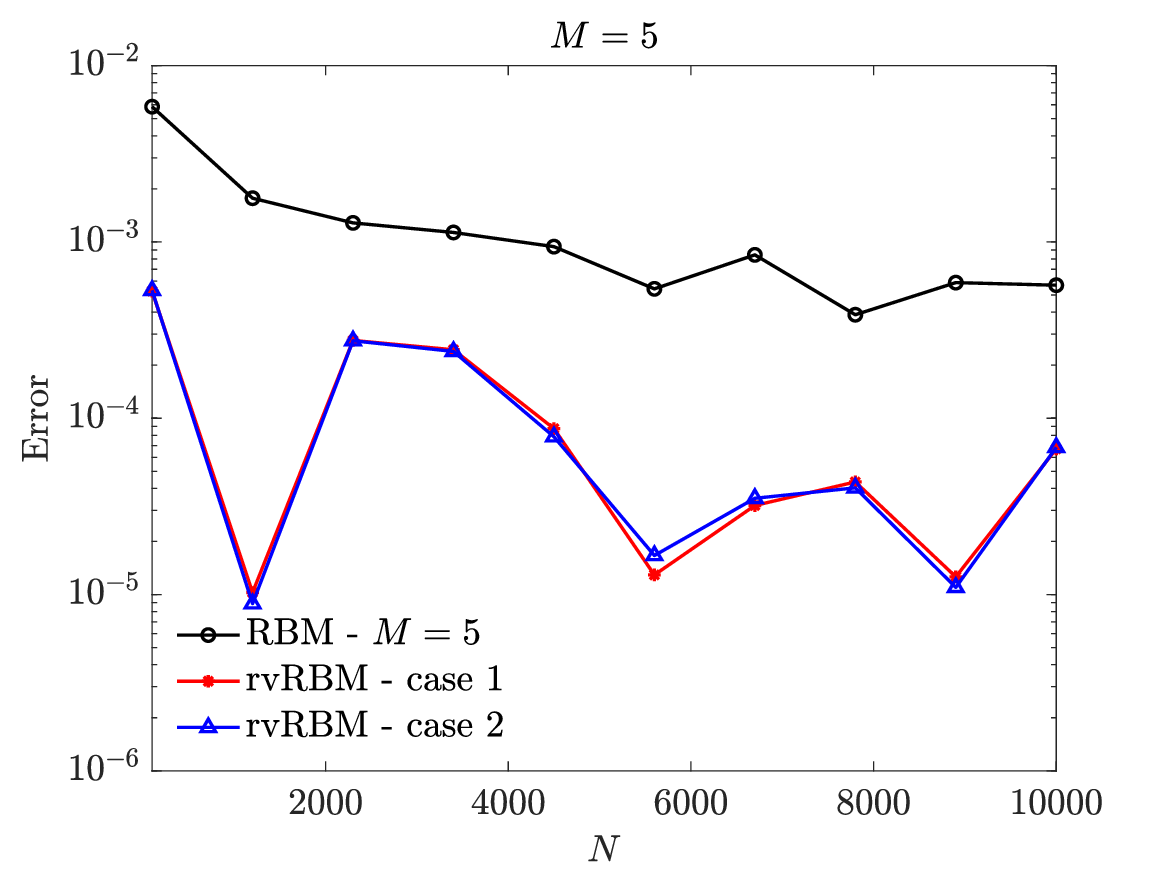}
\includegraphics[scale = 0.35]{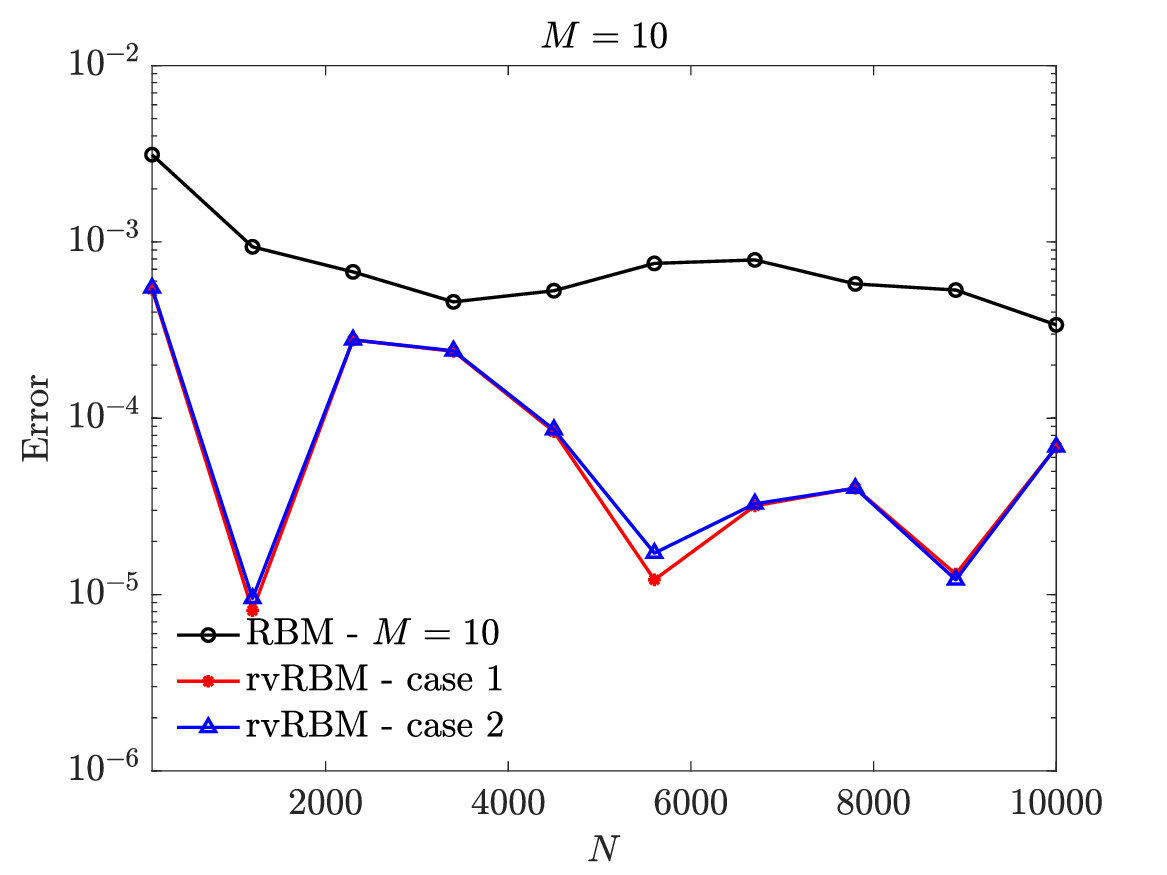} 
\caption{\textbf{Test 2.} Stochastic bounded confidence model.  Top row: evolution of the absolute error defined in \eqref{eq:error} for the stochastic bounded confidence model solved through RBM and rvRBM with $M = 5$ (left) or $M=10$ (right). Bottom row: errors of RBM and rvRBM for a fixed batch size $M = 5$ (left) or $M = 10$ (right) and variable size of the sample $N \in \{10,\dots,10^4\}$. In all the presented tests we considered $\delta = 1$ and $N = 10^5$ particles. }
\label{fig:error_BCstoch}
\end{figure}

\subsection{Test 3: Cucker-Smale model}
We consider the second order model for swarming introduced in Section \ref{sect:swarming}.  We recall that it is possible to prove that this model corresponds to the meanfield limit of the following particles' system
\[
\begin{split}
\dfrac{d}{dt} x_i &= v_i, \\
\dfrac{d}{dt} v_i &= \dfrac{1}{N} \sum_{j=1}^N P(|x_i-x_j|) (v_j-v_i), \qquad i = 1,\dots,N,
\end{split}
\]
where $P(|x_i-x_j|) = \frac{1}{(\xi^2 + |x_i-x_j|^2)^\beta}$, we will always consider  $\xi = 1$ and $\beta = 10^{-1}$, for which unconditional alignment emerges asymptotically. Hence, the RBM algorithm corresponds to consider 
\begin{equation}
\label{eq:CS_RBM}
\begin{split}
\dfrac{d}{dt} x_i &= v_i, \\
\dfrac{d}{dt} v_i &= \dfrac{1}{M} \sum_{j\in \mathcal S_M} \dfrac{v_j-v_i}{(\xi^2 + |x_i-x_j|^2)^\beta}, \qquad i = 1,\dots,N,
\end{split}
\end{equation}
where $\mathcal S_M$ is a uniform subsample of size $M>1$ obtained from $\{(x_i,v_i)\}_{i=1}^N \in \mathbb R^{2d}$. In the following, we construct a rvRBM method by considering 
\begin{equation}
\label{eq:CS_rvRBM}
\begin{split}
\dfrac{d}{dt} x_i &= v_i, \\
\dfrac{d}{dt} v_i &= \dfrac{1}{M} \sum_{j\in \mathcal S_M} \dfrac{v_j-v_i}{(\xi^2 + |x_i-x_j|^2)^\beta} - \lambda^*(x_i,v_i)\tilde P(x_i,v_i)(u_M-u_N), \qquad i = 1,\dots,N,
\end{split}
\end{equation}
and $\tilde P \equiv 1$, being 
\[
u_M = \dfrac{1}{M}\sum_{j\in \mathcal S_M}v_i, \qquad u_N = \dfrac{1}{N}\sum_{j=1}^Nv_i,
\]
where $u_N(t) \equiv u_N(0)$ being $P(|x_i-v_i|)$ a symmetric interaction function. Hence, the optimal value of $\lambda(x_i,v_i)$ is computed as in \eqref{eq:rvRBM}. 

In Figure \ref{fig:CS_distrib} we show the evolution of the reconstructed distribution functions in the phase space obtained with RBM (left panels) and rvRBM (right panels). We considered $N = 10^5$ particles having uniform initial distribution in both space and velocity, i.e.
\begin{equation}
\label{eq:CSf0}
f_0(x,v) = 
\begin{cases}
\frac{1}{4} &  (x,v) \in [-1,1]\times [-1,1] \\
0 & \textrm{elsewhere.}
\end{cases}
\end{equation}
Hence, we considered a RBM/rvRBM methods fixing a subsampling of size $M = 10$. We may observe how the obtained distributions consistent between the two methods also at the level of marginal distributions (bottom row).  

\begin{figure}
\centering
\includegraphics[scale = 0.35]{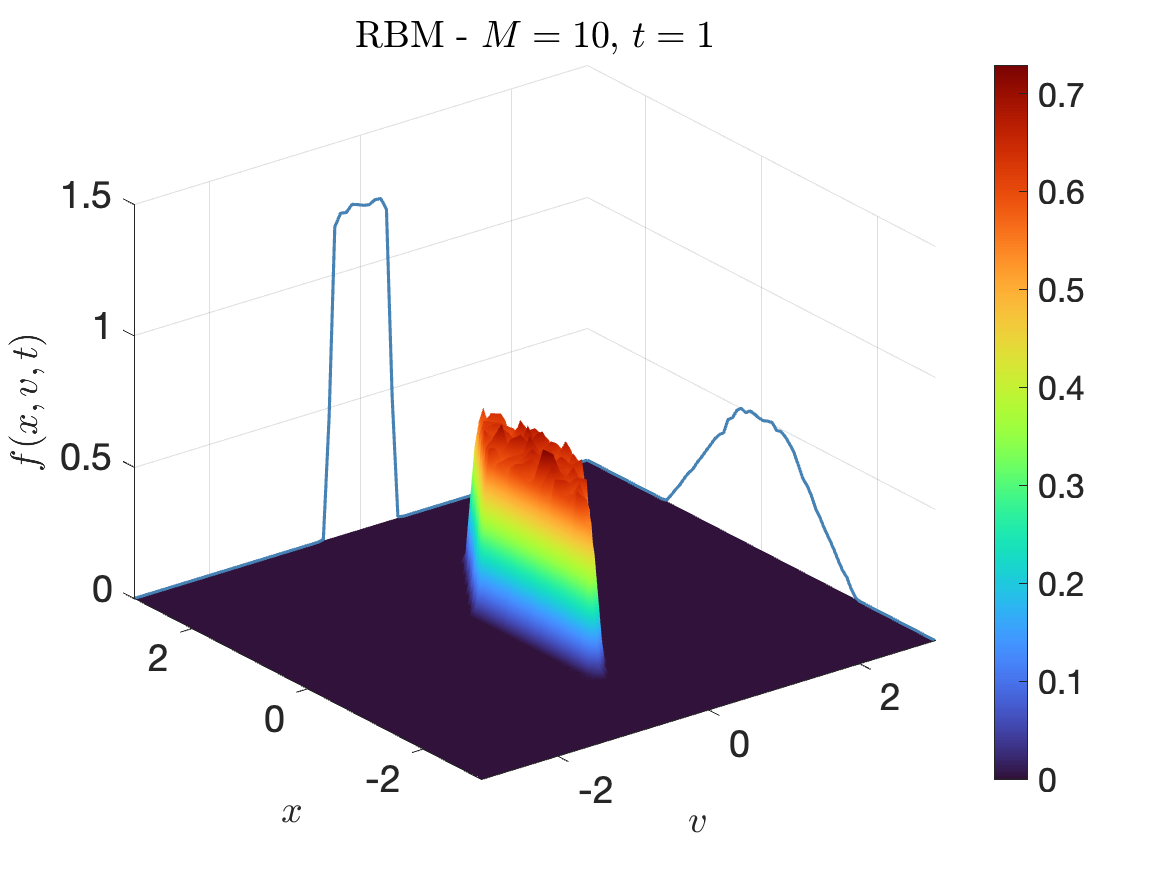}
\includegraphics[scale = 0.35]{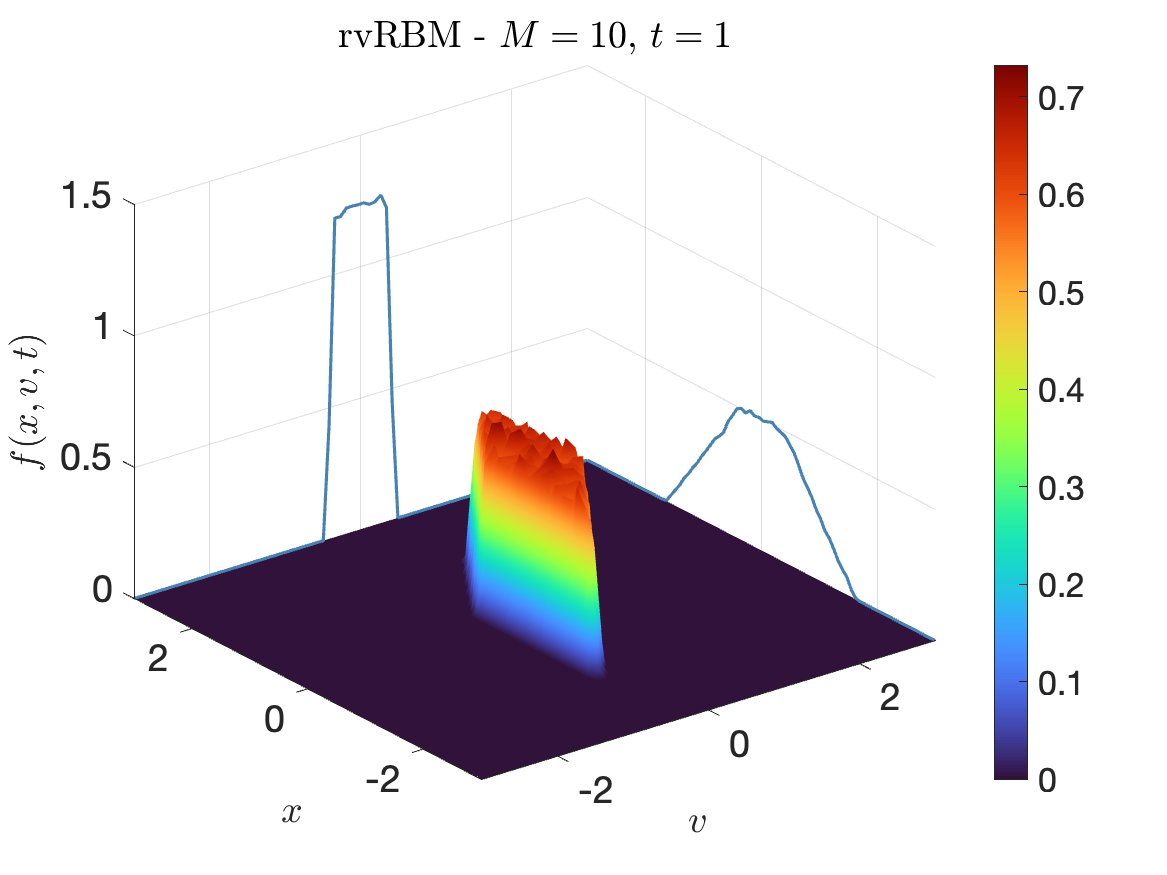}\\
\includegraphics[scale = 0.35]{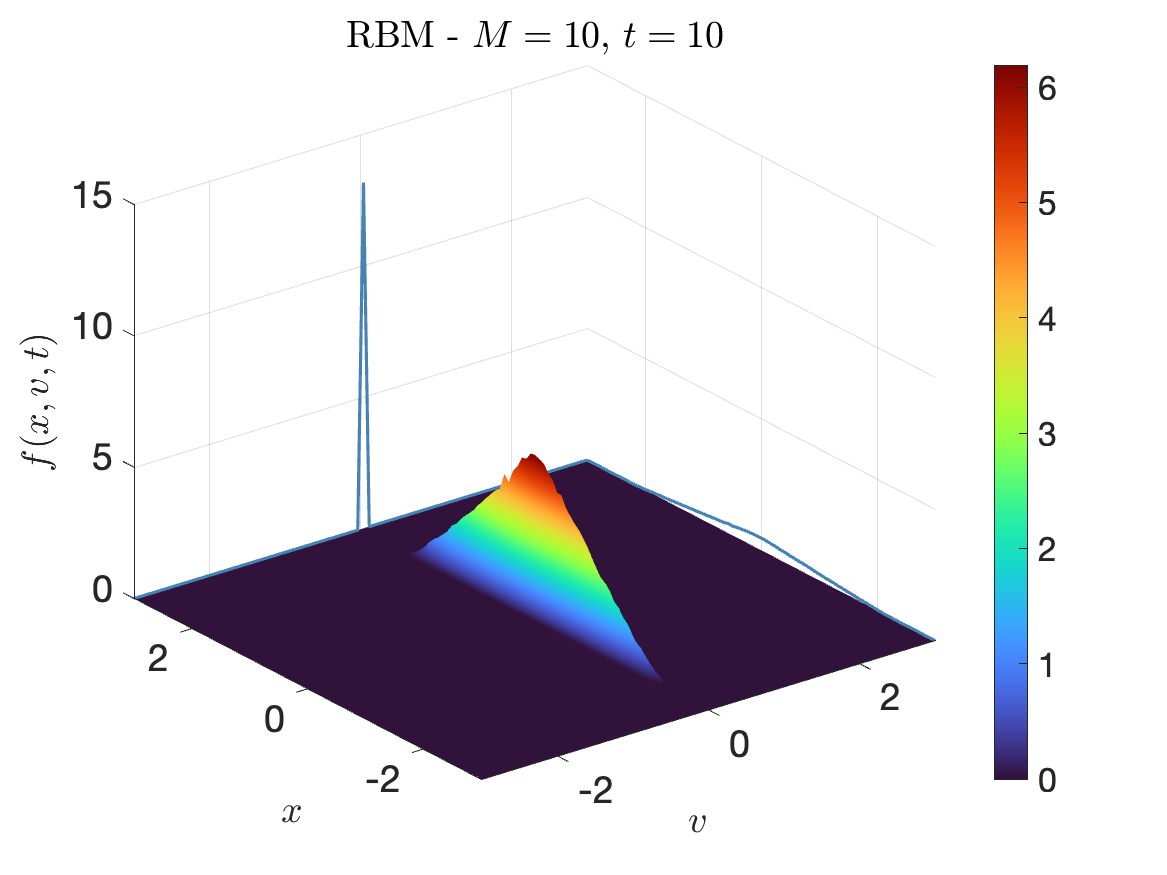}
\includegraphics[scale = 0.35]{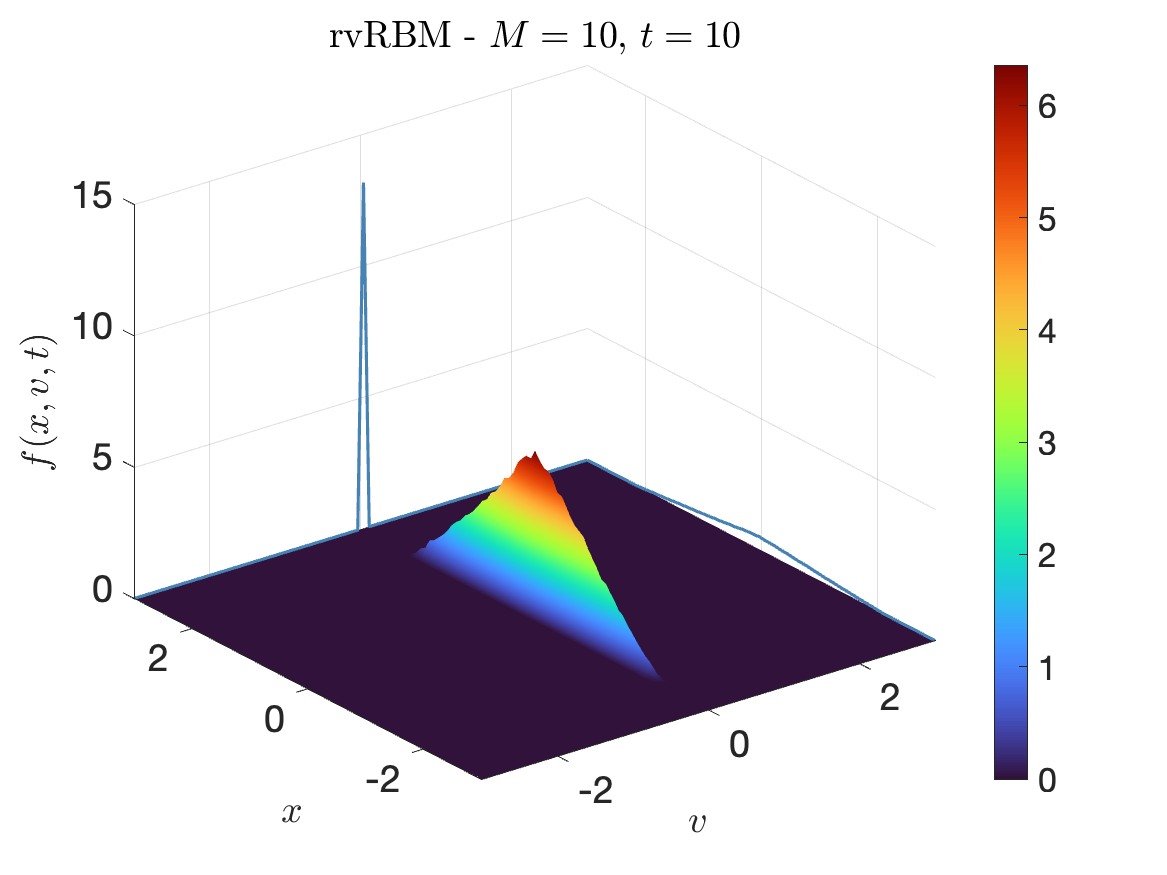}\\
\includegraphics[scale = 0.35]{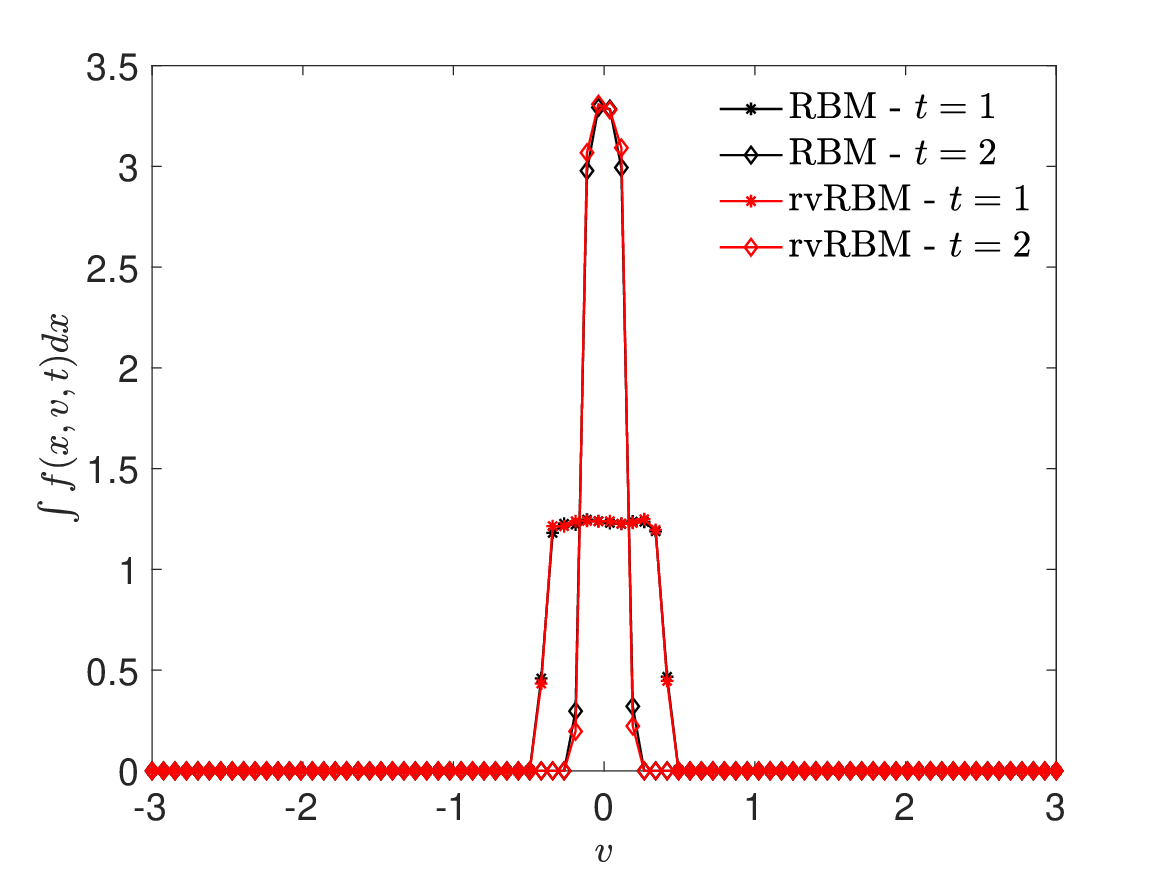}
\includegraphics[scale = 0.35]{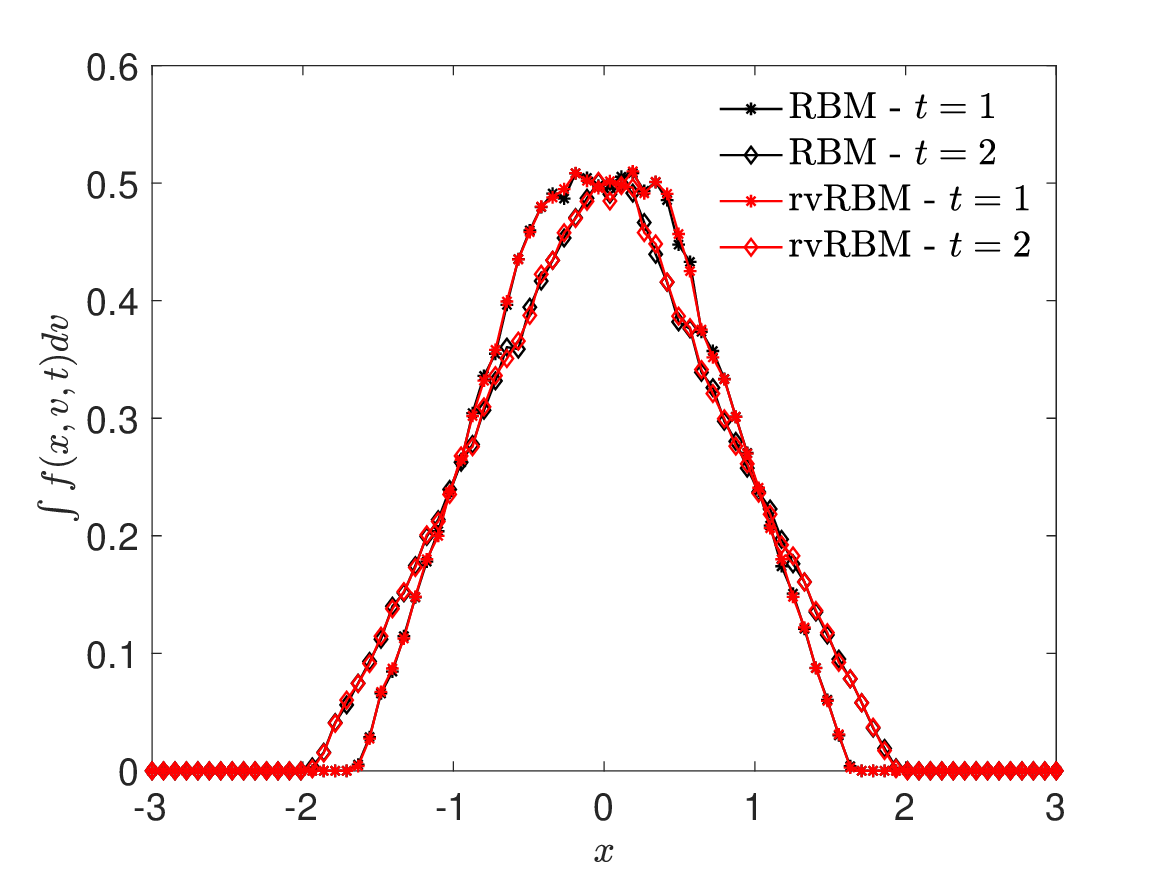}
\caption{\textbf{Test 3.} Evolution of the kinetic distributions obtained with RBM (left) or rvRBM (right) at time $t = 1$ (top row) and $t = 10$ (bottom row). For both methods we considered a subsample of size $M = 10$. In the bottom row we report the marginal densities obtained at time $t = 1$ and $t = 2$. The initial size of the particle system is $N = 10^5$ with initial distribution as in \eqref{eq:CSf0}. }
\label{fig:CS_distrib}
\end{figure}

We report in Figure \ref{fig:errorCS} the evolution of the error obtained with RBM and rvRBM. In detail, we fixed $N  = 10^5$ initial particles and a subsample of size $M = 5$ (left plot) or $M = 10$ (right plot). In both cases, we can observe the advantages produced by the reduced variance strategy. In Figure \ref{fig:lambdaCS} we report the evolution of the average optimal $\lambda^*$ computed for the Cucker-Smale model with either $M = 5$ or $M = 10$. We can observe how, due to the transport term the surrogate model lose the initial correlation with the full model. 

\begin{figure}
\centering
\includegraphics[scale = 0.35]{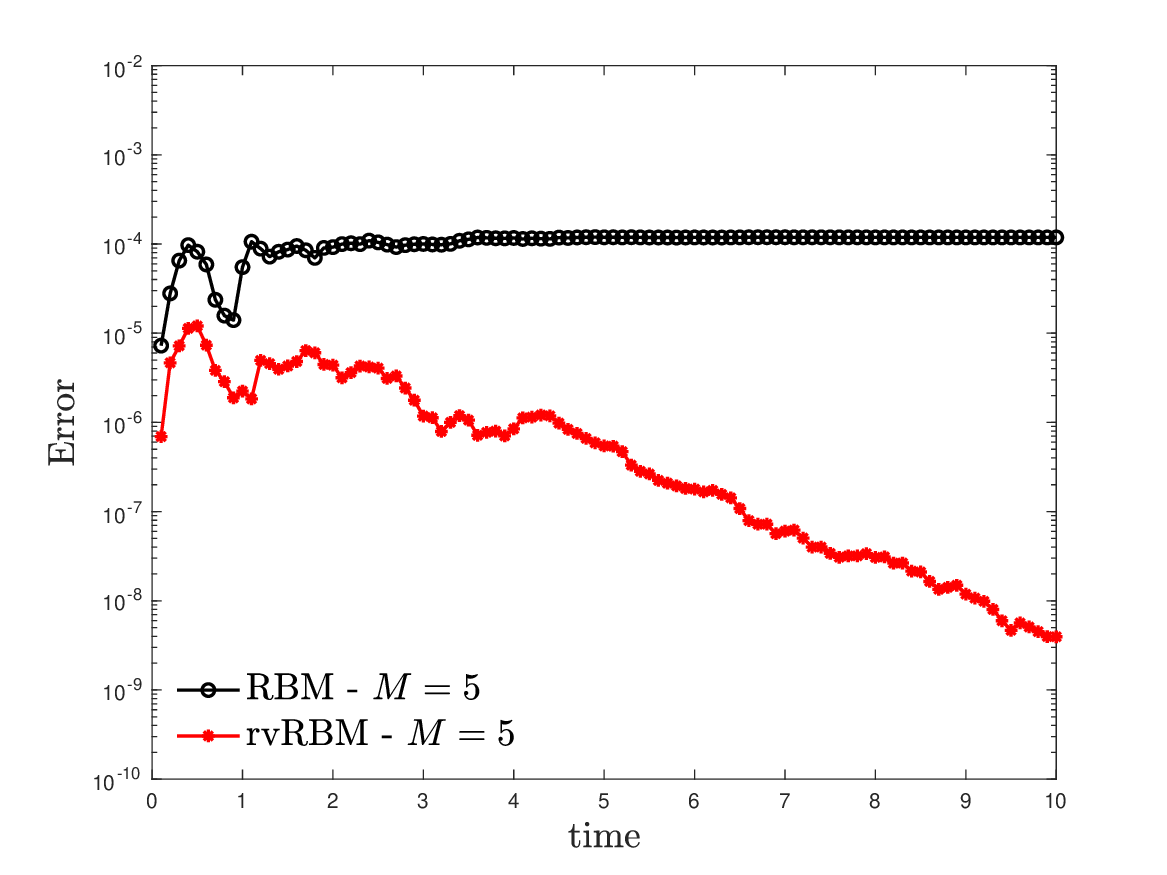}
\includegraphics[scale = 0.35]{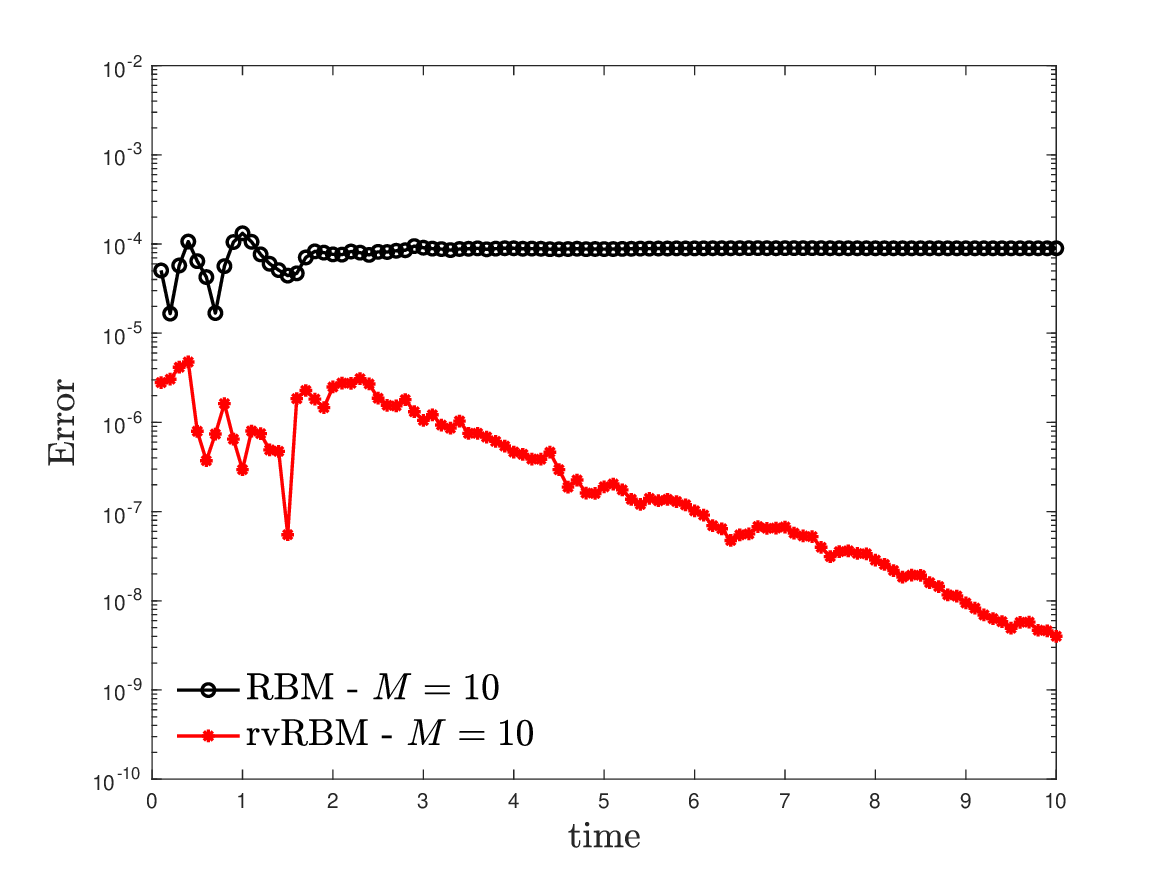}
\caption{\textbf{Test 3.} Evolution of the absolute error for the Cucker-Smale model solved through RBM as in \eqref{eq:CS_RBM} and rvRBM as in \eqref{eq:CS_rvRBM} with $M = 5$ (left) or $M = 10$ (right). }
\label{fig:errorCS}
\end{figure}

\begin{figure}
\centering
\includegraphics[scale = 0.35]{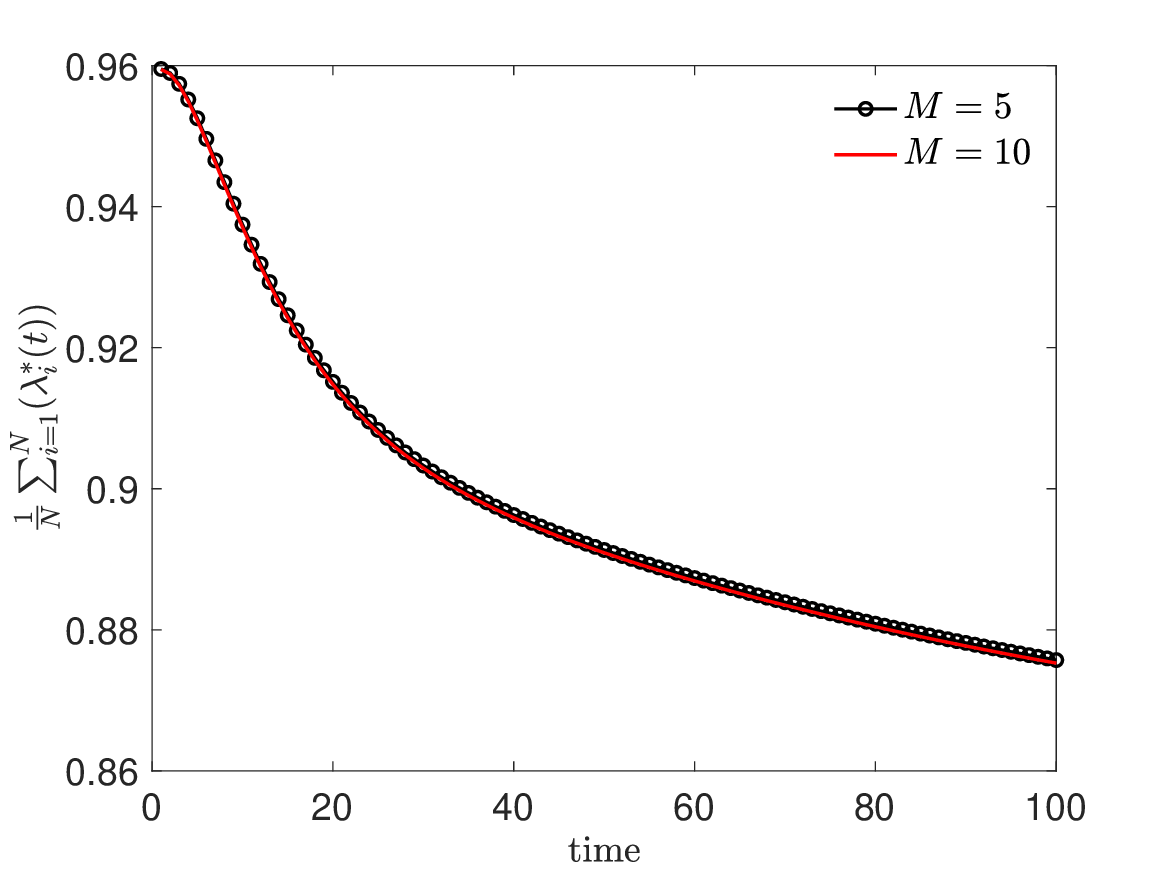}
\caption{\textbf{Test 3.} Evolution of the average optimal weight $\frac{1}{N} \sum_{i=1}^N \lambda_i^*(t)$ for the Cucker-Smale model approximated through rvRBM strategy with either $M = 5$ or $M = 10$. }
\label{fig:lambdaCS}
\end{figure}

\section{Conclusions}
The computational challenges associated with the $O(N^2)$ complexity of particle models for large systems with nonlocal interactions have driven the adoption of Random Batch Methods (RBM). While these methods, using batches of size $1<M<N$, efficiently reduce the computational burden to $O(MN)$, introducing an additional error becomes an inevitable trade-off. Addressing this challenge, our paper proposes a control variate strategy inspired by ideas from uncertainty quantification. By leveraging a reduced complexity surrogate model, we successfully mitigate batch variance, achieving a substantial reduction and, consequently, an improved computational cost-to-accuracy ratio.

Of course, the effectiveness of our proposed strategy is strongly dependent on the possibility to construct a surrogate model which keeps correlations with the original model. Nevertheless, even in the worst case scenario when the surrogate model looses correlations our methodology will never deteriorate its accuracy compared to standard RBM methods. This is demonstrated through numerical examples, particularly in the context of models depicting collective behavior in opinion spreading and swarming dynamics. This not only showcases the practical utility of our approach but also underlines its versatility across different scenarios.

Looking ahead, our work opens directions for further research, particularly in refining control variate strategies and optimizing the implementation of Random Batch Methods in various computational contexts. For example, one could also explore the use of a multilevel strategy as an alternative to a surrogate model^^>\cite{G}.
In this sense, the present approach marks a promising step towards bridging the gap between the microscopic complexity of interacting particle systems and the computational feasibility required for meaningful simulations, with potential implications for a broad spectrum of applications. 

\section*{Acknowledgements}
This research has been written within the activities of the national groups
GNCS (Gruppo Nazionale per il Calcolo Scientifico) and GNFM (Gruppo Nazionale di Fisica Matematica) of INdAM, Italy. MZ acknowledges the support of MUR-PRIN2020 Project No.2020JLWP23 (Integrated Mathematical Approaches to Socio-Epidemiological Dynamics). The research of LP has been supported by the Royal Society under the Wolfson Fellowship ``Uncertainty quantification, data-driven simulations and learning of multiscale complex systems governed by PDEs''. The work has been partially supported by the National Centre for HPC, Big Data and Quantum Computing (CN00000013). The authors also acknowledge the support of the Banff International Research Station (BIRS) for the Focused Research Group (22frg198) “Novel perspectives in kinetic equations for emerging phenomena”, July 17–24, 2022, where part of this work was done.

\end{document}